\newcommand{\om}{\omega}
\newcommand{\na}{\nabla}
\def\R{\mathbb{R}}
\newcommand{\p}{\partial}
\newtheorem{thm}{Theorem}[section]
\newtheorem{lem}{Lemma}[section]
\newcommand{\beq}{\begin{equation}}
\newcommand{\eeq}{\end{equation}}
\newcommand{\ben}{\begin{eqnarray}}
\newcommand{\een}{\end{eqnarray}}
\newcommand{\beno}{\begin{eqnarray*}}
\newcommand{\eeno}{\end{eqnarray*}}
\numberwithin{equation}{section}
\subjclass[2020]{35B35, 35B40,  35Q35, 76D03, 76D50}
\keywords{Boussinesq equations; Partial dissipation; Stability; Decay}
\begin{document}
	
	\title[Stability and exponential decay for Boussinesq equations]
	{Stability and exponential decay for the 2D anisotropic Boussinesq equations with horizontal dissipation}
	
 \author[Dong, Wu, Xu and Zhu]{Boqing Dong$^{1}$, Jiahong Wu$^{2}$, Xiaojing Xu$^{3}$ and Ning Zhu$^{4}$} 
	
\address{$^1$ College of Mathematics and Statistics, Shenzhen University, Shenzhen 518060, P.R. China}
	
	\email{bqdong@szu.edu.cn}
	
\address{$^2$ Department of Mathematics, Oklahoma State University, Stillwater, OK 74078, United States}
	
	\email{jiahong.wu@okstate.edu}
	
\address{$^3$ Laboratory of Mathematics and Complex Systems, Ministry of Education, School of Mathematical Sciences, Beijing Normal University, Beijing 100875, P.R. China}	

		\email{xjxu@bnu.edu.cn}

\address{$^4$ School of Mathematical Sciences, Peking University, Beijing 100871, P.R. China}
		
		\email{mathzhu1@163.com}

	\vskip .2in
	\begin{abstract}
The hydrostatic equilibrium is a prominent topic in fluid dynamics and astrophysics. Understanding the stability of perturbations near the hydrostatic equilibrium of the Boussinesq {{system}} helps gain insight into certain weather phenomena. The 2D Boussinesq system focused here is anisotropic and involves only horizontal dissipation
and horizontal thermal diffusion. Due to the lack of the vertical dissipation, the stability and precise large-time behavior problem is difficult. When the spatial domain is $\mathbb R^2$, the stability problem in a Sobolev setting
remains open. When the spatial domain is $\mathbb T\times \mathbb R$, this paper solves 
the stability problem and specifies the precise large-time behavior of the perturbation. By decomposing the velocity $u$ and 
temperature $\theta$ into the horizontal average $(\bar u, \bar\theta)$  and the corresponding oscillation $(\widetilde u, \widetilde \theta)$,  and deriving various anisotropic inequalities, we are able to establish the global stability in the Sobolev space $H^2$. In addition, we prove that the oscillation $(\widetilde u, \widetilde \theta)$ decays exponentially to zero in $H^1$ and $(u, \theta)$ converges to $(\bar u, \bar\theta)$. This result reflects the stratification phenomenon of buoyancy-driven fluids.
\end{abstract}
	\maketitle

\vskip .2in 
\section{Introduction}

{{The}} goal of this paper is to understand the stability and large-time behavior problem on perturbations 
near the hydrostatic equilibrium of buoyancy-driven fluids. Being capable of capturing the key features of 
buoyancy-driven fluids such as stratification, the Boussinesq equations have become the most frequently used
models for these circumstances (see, e.s., \cite{Maj, Pe}). The Boussinesq system concerned here assumes the form 
\beq\label{boussiensq}
\begin{cases}
	\p_t u + u\cdot\na u = -\na P + \nu \p_{11} u+\Theta e_2,  \\
	\p_t \Theta + u\cdot\na \Theta =  \kappa \p_{11} \Theta,  \\
	\na\cdot u=0,
\end{cases}
\eeq
where $u = (u_1,u_2)$ denotes the velocity field, $P$ the pressure, $\Theta$ the temperature, $e_2=(0,1)$ (the unit vector in the vertical direction), and $\nu > 0$ and $\kappa > 0$ are the viscosity and the thermal diffusivity, respectively. (\ref{boussiensq}) involves only horizontal 
dissipation and horizontal thermal diffusion, and governs the motion of anisotropic fluids when the corresponding 
vertical dissipation and thermal diffusion are negligible (see, e.g., \cite{Pe}).

\vskip .1in 
The Boussinesq systems have attracted considerable interests recently due
to their broad physical applications and mathematical significance. The Boussinesq systems are the most frequently 
used models for buoyancy-driven fluids such as many large-scale geophysical flows and the 
{{Rayleigh-B\'{e}nard}} convection (see, e.g., \cite{ConD, DoeringG,Maj,Pe}). The Boussinesq equations 
are also mathematically important. They share many similarities with the 3D Navier-Stokes and 
the Euler equations. In fact, the 2D Boussinesq equations retain some key features of the 3D Euler and Navier-Stokes equations such as the vortex stretching mechanism.  The inviscid 2D Boussinesq equations can be identified as the Euler equations for the 3D axisymmetric swirling flows \cite{MaBe}.

\vskip .1in 
Many efforts have been devoted to understanding two fundamental problems concerning the Boussinesq systems. 
The first is the global existence and regularity problem. Substantial progress has been made on various Boussinesq
systems, especially those with only partial or fractional dissipation (see, e.g., \cite{ACSW,ACW10,ACW11,ACWX,CaoWu1,Ch,ChConWu,Chae-Nam1997,ChaeWu,CKY,ConVWu,
	DP2,DP3,ElJ,Tarek2,LBHe,HKR1,HKR2,
	HL,Hu,HuWangWu,JMWZ,JWYang,KRTW,KiTan,LaiPan,LLT,LSWu,MX,PaicuZhu, SaWu,SteWu,Wen,Wu,Wu_Xu,Wu_Xu_Xue_Ye,Wu_Xu_Zhu,
	Wu_Xu_Ye,Xu0,YangJW,YangJW2,YeXu,zhao2010}).
The second is the stability problem on perturbations near several physically relevant steady states. 
The investigations on the stability problem {{are}} relatively more recent. One particular important  
steady state is the hydrostatic equilibrium. Mathematically the hydrostatic equilibrium refers to the 
stationary solution $(u_{he}, \Theta_{he}, P_{he})$ with 
$$
u_{he} =0, \quad \Theta_{he} =x_2, \quad P_{he}= \frac12 x_2^2.
$$
The hydrostatic equilibrium is 
one of the most prominent topics in fluid dynamics, atmospherics and astrophysics. In fact, our atmosphere 
is mostly in the hydrostatic equilibrium with the upward pressure-gradient force balanced out by 
the downward gravity. The work of Doering, Wu, Zhao and Zheng \cite{DWZZ} initiated the rigorous study on the stability
problem near the hydrostatic equilibrium of the 2D Boussinesq equations with only velocity dissipation. A 
followup work of Tao, Wu, Zhao and Zheng establishes the large-time behavior and the eventual temperature profile \cite{TWZZ}. The paper of Castro,  C\'ordoba and Lear successfully  established the stability and large time behavior on 
the 2D Boussinesq equations with velocity damping instead of dissipation \cite{CCL}. There are other more recent work (\cite{Ben, Wan, Wu_Xu_Zhu,Zil}). Another important steady state is the shear flow. Linear stability results on the shear flow for several 
partially dissipated Boussinesq systems are obtained in \cite{TW} and \cite{Zil} while the nonlinear stability problem 
on the shear flow of the 2D Boussinesq equations with only vertical dissipation was solved by \cite{DWZ}. 

\vskip .1in 
The goal of this paper is to assess the stability and the precise large-time 
behavior of perturbations near the hydrostatic equilibrium. To understand the stability problem, we write the equations for the perturbation
$(u, p, \theta)$ where 
$$
p= P- \frac12 x_2^2 \quad \mbox{and}\quad \theta= \Theta-x_2.
$$
It is easy to check that $(u,p,\theta)$ satisfies
\beq\label{B1}
\begin{cases}
	\p_t u + u\cdot\na u = -\na p + \nu \,\p_{11} u+\theta e_2,\\
	\p_t \theta + u\cdot\na \theta+u_2 =  \kappa\, \p_{11} \theta,  \\
	\na\cdot u=0, \\
	u(x,0) =u_0(x), \quad\theta(x,0)=\theta_0(x).
\end{cases}
\eeq
We have observed a new phenomenon on (\ref{B1}). It appears that the type of the spatial domain plays a crucial role 
in the resolution of the stability problem concerned here. 

\vskip .1in 
When the spatial domain is the whole plane $\mathbb R^2$, the stability problem remains an open 
problem. Given any sufficiently smooth initial data $(u_0, \theta_0)\in H^2(\mathbb R^2)$,  (\ref{B1}) does 
admit a unique global solution. But the solution could potentially grow rather rapidly in time. In fact,
the best upper bounds one could obtain on $\|u(t)\|_{H^1}$ and $\|\theta(t)\|_{H^1}$ grow algebraically in time. The only way to possibly establish upper bounds that are uniform in time is to combine the equation of $\na u$ and that of $\na \theta$, or equivalently the equation of $\om$ with $\na \theta$, where $\om=\na \times u$ is the 
vorticity. When we combine the equations of $\om$ and $\na \theta$, 
\beq\label{gh}
\begin{cases} 
\p_t \om + u\cdot\na \om  = \nu \, \p_{11} \om + \p_1 \theta, \\
\p_t \na \theta + u\cdot\na (\na \theta) + \na u_2 = \kappa\, \na \p_{11} \theta - \na u\cdot \na \theta
\end{cases} 
\eeq
and estimate $\|\om\|_{L^2}$ and $\|\na \theta\|_{L^2}$ simultaneously, we can eliminate the term from the 
buoyancy, namely $\p_1 \theta$. In fact, a simple energy estimate on (\ref{gh}) yields, after suitable integration by parts, 
\begin{align}
&\frac{d}{dt} \left(\|\om(t)\|_{L^2}^2 + \|\na \theta(t)\|_{L^2}^2\right) + 2 \nu \,\|\p_1 \om(t)\|_{L^2}^2 
+ 2 \kappa \,\|\p_1 \na \theta(t)\|_{L^2}^2 \notag \\
&= -\int_{\mathbb R^2} \na \theta \cdot \na u \cdot \na \theta\,dx.\label{ener}
\end{align}
The difficulty is how to obtain a suitable upper bound on the term on the right-hand side of (\ref{ener}). 
To make full use of the anisotropic dissipation, we naturally divide this term further into four component terms
\ben
- \int_{\mathbb R^2} \na \theta \cdot \na u \cdot \na \theta\,dx &=& - \int_{\mathbb R^2} \p_1 u_1 (\p_1 \theta)^2\,dx  - \int_{\mathbb R^2} \p_1 u_2 \, \p_1\theta\,\p_2\theta\,dx \notag\\
&& - \int_{\mathbb R^2} \p_2 u_1 \, \p_1\theta\,\p_2\theta\,dx - \int_{\mathbb R^2} \p_2 u_2 (\p_2 \theta)^2\,dx.
\label{hh}
\een
Due to the lack of dissipation or thermal diffusion in the vertical direction, the {last} two terms 
on (\ref{hh}) prevents us from bounding them suitably. This is one of the difficulties that keep the stability 
problem on (\ref{B1}) open when the spatial domain is the whole plane $\mathbb R^2$. 

\vskip .1in 
When the spatial domain is 
$$
\Omega = \mathbb T\times \mathbb R
$$
with $\mathbb T=[0, 1]$ being a 1D periodic box and $\mathbb R$ being the whole line, this paper is able to 
solve the desired stability problem on (\ref{B1}). In fact, we are able to prove the following result. 
\begin{thm}\label{main1}
	Let $\mathbb T=[0, 1]$ be a 1D periodic box and let $\Omega= \mathbb T\times \mathbb R$.  
	Assume $u_0,\theta_0\in H^2(\Omega)$ and $\na\cdot u_0=0$. Then there exists $\varepsilon>0$ such that, if
\begin{equation}
\label{smallness condition}
\|u_0\|_{H^2}+\|\theta_0\|_{H^2}\leq \varepsilon,
\end{equation}
	then (\ref{B1}) has a unique global solution that remains uniformly bounded for all time,
	$$
	\|u(t)\|_{H^2}^2+\|\theta(t)\|_{H^2}^2+ \nu \int_0^t \|\p_1 u(\tau)\|_{H^2}^2 \,d\tau+\kappa \int_0^t \|\p_1 \theta(\tau)\|_{H^2}^2 \,d\tau \le \, C^2_0 \varepsilon^2
	$$
	for some pure constant $C_0>0$ and for all $t>0$. 
\end{thm}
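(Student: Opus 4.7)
The plan is a standard bootstrap argument in $H^2$: since local well-posedness of (\ref{B1}) in $H^2(\Om)$ is classical, it suffices to prove the following \emph{a priori} estimate. If a solution satisfies $E(t):=\|u(t)\|_{H^2}^2+\|\theta(t)\|_{H^2}^2\le(C_0\ep)^2$ on some interval $[0,T]$, then in fact $E(t)\le\tfrac12(C_0\ep)^2$ there, with simultaneous control of $\int_0^t D(s)\,ds$, where $D(t):=\nu\|\p_1 u(t)\|_{H^2}^2+\kappa\|\p_1\theta(t)\|_{H^2}^2$. The mechanism that makes this work on $\Om=\mathbb T\times\R$, but not obviously on $\R^2$, is the horizontal-average/oscillation decomposition $f=\bar f+\widetilde f$ with $\bar f(x_2,t):=\int_{\mathbb T}f(x_1,x_2,t)\,dx_1$. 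Two features are decisive: first, integrating $\na\cdot u=0$ over $\mathbb T$ in $x_1$ gives $\p_2\bar u_2=0$, and since $\bar u_2$ is a function of $x_2$ alone lying in $L^2(\R)$, we conclude $\bar u_2\equiv 0$; second, the oscillation $\widetilde f$ has zero horizontal mean, so the Poincar\'e inequality $\|\widetilde f\|_{L^2}\le C\|\p_1\widetilde f\|_{L^2}$ converts the horizontal dissipation inside $D(t)$ into full $L^2$-control of $\widetilde u$ and $\widetilde\theta$, which is exactly what was missing in (\ref{hh}).

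\textbf{$H^2$ energy identity and splitting of the nonlinearity.} Applying $\p^\alpha$ for $|\alpha|\le 2$ to (\ref{B1}), taking $L^2$ inner products with $(\p^\alpha u,\p^\alpha\theta)$, summing, and using the buoyancy/temperature cancellation at every derivative level (exactly as in (\ref{ener})), one obtains $\tfrac12\tfrac{d}{dt}E(t)+D(t)=\mathcal N(u,\theta)$, where $\mathcal N$ is a sum of trilinear integrals arising from $u\cdot\na u$ and $u\cdot\na\theta$. Each trilinear integral is expanded via $u=\bar u+\widetilde u$, $\theta=\bar\theta+\widetilde\theta$. A piece in which an $x_1$-derivative falls on an average factor is identically zero; a purely-average piece is controlled because averaging (\ref{B1}) and using $\bar u_2\equiv 0$ yields $\p_t\bar u_1=-\p_2\overline{\widetilde u_1\widetilde u_2}$ and $\p_t\bar\theta=-\p_2\overline{\widetilde u_2\widetilde\theta}$, so that the averages are themselves quadratic in oscillations; and a piece containing at least one oscillation can be reduced by Poincar\'e to one in which a $\p_1$-derivative sits on a factor already counted inside $D(t)$. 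In particular the terms $\int\p_2 u_1\,\p_1\theta\,\p_2\theta\,dx$ and $\int\p_2 u_2\,(\p_2\theta)^2\,dx$ of (\ref{hh}) that blocked the $\R^2$ argument are resolved here: the purely-average contribution of the second vanishes because $\bar u_2=0$, and every remaining contribution carries an oscillation factor supplying the missing horizontal derivative.

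\textbf{Anisotropic inequalities, closure, and main obstacle.} The trilinear bounds are realized by two classes of anisotropic inequalities on $\Om$: for an oscillation $\widetilde f$,
\[
\|\widetilde f\|_{L^\infty(\Om)}\le C\,\|\p_1\widetilde f\|_{L^2(\Om)}^{1/2}\,\|\p_1\p_2\widetilde f\|_{L^2(\Om)}^{1/2},
\]
together with its $L^4$ analogues, and for an average $\bar f=\bar f(x_2)$, the 1D Agmon-type inequality $\|\bar f\|_{L^\infty(\R)}\le C\|\bar f\|_{L^2(\R)}^{1/2}\|\p_2\bar f\|_{L^2(\R)}^{1/2}$. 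Combining these with Poincar\'e for $\widetilde f$ bounds every piece of $\mathcal N(u,\theta)$ by $C\,E(t)^{1/2}D(t)$, so that
\[
\frac{d}{dt}E(t)+D(t)\le C\,E(t)^{1/2}\,D(t).
\]
Within the bootstrap window where $E(t)\le(C_0\ep)^2$, choosing $\ep$ small enough that $C\,C_0\ep\le\tfrac12$ gives $\frac{d}{dt}E+\tfrac12 D\le 0$; integrating and using $E(0)\le\ep^2$ then closes the bootstrap provided $C_0\ge\sqrt{2}$. The main obstacle will be the case-by-case bookkeeping at the top derivative level: second-order trilinear pieces include distributions such as $\int\p_2^2 u_i\,\p_2 u_j\,\p_2^2\theta\,dx$ in which the convenient $x_1$-structural cancellations do not immediately apply. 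To close, one must rearrange every such term via integration by parts in $x_1$ and the divergence-free identity $\p_1\widetilde u_1=-\p_2\widetilde u_2$ (which trades a vertical derivative of $\widetilde u_2$ for a horizontal derivative of $\widetilde u_1$), so that at least one $\p_1$ lands on an oscillation factor and becomes absorbable by $D(t)$. This verification, performed uniformly over all second-order multi-indices, is what makes the geometry $\mathbb T\times\R$ genuinely distinct from $\R^2$ and is the heart of the proof.
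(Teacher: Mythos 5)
Your proposal follows essentially the same route as the paper's proof: a bootstrap on the $H^2$ energy, the decomposition $f=\bar f+\widetilde f$ with $\bar u_2\equiv 0$, the strong Poincar\'e inequality $\|\widetilde f\|_{L^2}\le C\|\p_1\widetilde f\|_{L^2}$, and anisotropic Agmon/Ladyzhenskaya bounds used to place a $\p_1$-derivative on an oscillation factor in every trilinear piece, closing via $\tfrac{d}{dt}E+D\le CE^{1/2}D$ (the paper's equivalent Young-split form is $E(t)\le E(0)+CE(t)^2+CE(t)^3$). The only cosmetic differences are that the paper performs the $H^1$ and $H^2$ estimates on $(\om,\na\theta)$ and $(\na\om,\Delta\theta)$ rather than on $\p^\alpha(u,\theta)$ directly, and your aside that purely-averaged pieces must be controlled through the averaged equations is unnecessary, since each such piece vanishes identically (every surviving term already carries a $\p_1$ on an oscillation).
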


How does this domain makes a difference? The key point is that $\Omega$ allows us to separate 
the horizontal average (or the zeroth horizontal Fourier mode) from the corresponding oscillation 
part. These two different parts have different physical behavior. In fact, this decomposition is partially
motivated by the stratification phenomenon observed in numerical results \cite{DWZZ}.  The numerical
simulations performed in \cite{DWZZ} show that the  temperature {{becomes}} horizontally homogeneous and stratify in the vertical direction as time evolves. Mathematically we do not expect 
the horizontal average to decay in time since it is associated with the zeroth horizontal Fourier 
mode and the dissipative effect at this mode vanishes. The oscillation part could decay exponentially. 
In addition, this decomposition and  the oscillation part possess several desirable mathematical 
properties such as a strong Poincar\'{e} type inequality. The two difficult terms in (\ref{hh}) are now 
handled by decomposing both $u$ and $\theta$ into the aforementioned two parts, and different terms induced
by the decomposition are estimated differently. This is the main reason why the impossible stability problem in the $\mathbb R^2$
case becomes solvable when the domain is $\Omega = \mathbb T\times \mathbb R$. 

\vskip .1in 
To make the idea described above more precise, we introduce a few notations. Since the functional setting 
for our solution $(u, \theta)$ is $H^2(\Omega)$, it is meaningful to define the horizontal average, 
$$
\bar{u}(x_2, t) = \int_{\mathbb T} u(x_1, x_2, t)\,dx_1.
$$
We set $\widetilde u$ to be the corresponding oscillation part  
$$
\widetilde u = u - \bar{u} \quad \mbox{or}\quad u = \bar{u} + \widetilde u.
$$
$\bar{\theta}$ and $\widetilde \theta$ are similarly defined. This decomposition is orthogonal, 
$$
(\bar u, \widetilde u) := \int_{\Omega} \bar{u}\, \widetilde u\,dx =0, \qquad 
\|u\|_{L^2(\Omega)}^2 = \|\bar{u}\|_{L^2(\Omega)}^2 + \|\widetilde u\|_{L^2(\Omega)}^2. 
$$ 
A crucial property of $\widetilde u$ is that it obeys a strong version of the Poincar\'{e} type 
inequality,
\beq\label{pp}
\|\widetilde u\|_{L^2(\Omega)} \le C\, \|\p_1 \widetilde u\|_{L^2(\Omega)},
\eeq
where the full gradient in the standard Poincar\'{e} type inequality is replaced by $\p_1$. 
With this decomposition at our disposal, we are ready to handle the two difficult terms in (\ref{hh}). 
For the sake of conciseness, we focus on the first term. By invoking the decomposition, we can further 
split it into four terms, 
\begin{align}
\int_\Omega \p_2 u_1 \, \p_1 \om\, \p_2 \om\,dx &= \int_\Omega \p_2(\bar u_1 + \widetilde u_1)
\, \p_1(\bar \om + \widetilde \om)\,\p_2(\bar \om + \widetilde \om)\,dx \notag\\
&= \int_\Omega \p_2 \bar u_1 \, \p_1 \widetilde\om\, \p_2 \bar\om\,dx + \int_\Omega \p_2 \bar u_1 \, \p_1\widetilde \om\, \p_2 \widetilde\om\,dx\notag\\
&\quad  + \int_\Omega \p_2 \widetilde u_1 \, \p_1 \widetilde \om\, \p_2 \bar \om\,dx + \int_\Omega \p_2 \widetilde u_1 \, \p_1 \widetilde \om\, \p_2 \widetilde \om\,dx, \label{dde}
\end{align}
where we have used $\p_1 \bar \om =0$. The first term in (\ref{dde}) is clearly zero, 
$$
\int_\Omega \p_2 \bar u_1 \, \p_1 \widetilde\om\, \p_2 \bar\om\,dx =0.
$$
The other three terms in (\ref{dde}) can all be bounded suitably by applying (\ref{pp}) and several other 
anisotropic inequalities, as stated in the lemmas in Section \ref{secA}. We leave more technical details to the proof of 
Theorem \ref{main1} in Section \ref{sec3}. 

\vskip .1in 
Our second main result states that the oscillation part $(\widetilde u, \widetilde \theta)$ decays 
to zero exponentially in time in the $H^1$-norm. This result reflects the stratification phenomenon 
of buoyancy driven fluids. It also rigorously confirms the observation of the numerical 
simulations in \cite {DWZZ}, the temperature eventually stratifies and converges to the horizontal average.  
As we have explained before, the horizontal average $(\bar u, \bar \theta)$ is not expected to decay in time. 

\begin{thm}\label{main2}  
	Let $u_0,\theta_0\in H^2(\Omega)$ with $\na\cdot u_0=0$. Assume that $(u_0, \theta_0)$ satisfies \eqref{smallness condition} for sufficiently small $\varepsilon>0$. Let $(u, \theta)$ be the corresponding solution of (\ref{B1}). Then the $H^1$ norm of the oscillation part $(\widetilde{u},\widetilde{\theta})$ decays exponentially in time, 
	$$\|\widetilde{u}(t)\|_{H^1}+\|\widetilde{\theta}(t)\|_{H^1}\leq  (\|{u}_0\|_{H^1}+\|{\theta}_0\|_{H^1})e^{-C_1t}, $$
	for some pure constant $C_1>0$ and for all $t>0$. 
\end{thm}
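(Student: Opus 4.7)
My plan is to obtain an energy inequality of the form
$$
\frac{d}{dt}\mathcal{E}(t)+c\,\mathcal{E}(t)\le 0,\qquad \mathcal{E}(t)\simeq \|\widetilde u(t)\|_{H^1}^2+\|\widetilde\theta(t)\|_{H^1}^2,
$$
from which Gronwall's inequality immediately yields the stated exponential decay. The engine driving the decay is the strong Poincar\'e inequality (\ref{pp}): once a closed differential inequality for $\mathcal{E}$ involving only the \emph{horizontal} dissipation is derived, (\ref{pp}) converts that dissipation into coercive control on $\mathcal{E}$ itself.

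First I would derive the equations for the oscillation. Averaging (\ref{B1}) in $x_1\in\mathbb T$ and subtracting yields
$$
\partial_t\widetilde u+(u\cdot\nabla u)^\sim=-\nabla\widetilde p+\nu\,\partial_{11}\widetilde u+\widetilde\theta\,e_2,\qquad
\partial_t\widetilde\theta+(u\cdot\nabla\theta)^\sim+\widetilde u_2=\kappa\,\partial_{11}\widetilde\theta,
$$
with $\nabla\cdot\widetilde u=0$, where $(\cdot)^\sim$ means a quantity minus its horizontal average; in addition $\partial_2\bar u_2=0$ together with $L^2$ integrability in $x_2$ forces $\bar u_2\equiv 0$. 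For the $L^2$ estimate on $(\widetilde u,\widetilde\theta)$, two cancellations are essential: the buoyancy/stratification pair satisfies $(\widetilde\theta e_2,\widetilde u)-(\widetilde u_2,\widetilde\theta)=0$, and $(\overline{u\cdot\nabla u},\widetilde u)=(\overline{u\cdot\nabla\theta},\widetilde\theta)=0$ by orthogonality of the bar/tilde decomposition. After using $(u\cdot\nabla\widetilde u,\widetilde u)=(u\cdot\nabla\widetilde\theta,\widetilde\theta)=0$ from incompressibility, the only surviving nonlinear pieces are $\int_\Omega \widetilde u_2\,\partial_2\bar u_1\,\widetilde u_1\,dx$ and $\int_\Omega \widetilde u_2\,\partial_2\bar\theta\,\widetilde\theta\,dx$, which are bounded by $C\varepsilon(\|\widetilde u\|_{L^2}^2+\|\widetilde\theta\|_{L^2}^2)$ using Theorem~\ref{main1} and the 1D Sobolev embedding $\|\partial_2\bar u_1\|_{L^\infty_{x_2}}+\|\partial_2\bar\theta\|_{L^\infty_{x_2}}\le C\|(u,\theta)\|_{H^2(\Omega)}\le CC_0\varepsilon$.

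For the $H^1$ contribution I would work with $\widetilde\omega$ and $\nabla\widetilde\theta$, exploiting $\|\nabla\widetilde u\|_{L^2}\simeq\|\widetilde\omega\|_{L^2}$ from $\nabla\cdot\widetilde u=0$. The linear coupling $(\partial_1\widetilde\theta,\widetilde\omega)-(\nabla\widetilde u_2,\nabla\widetilde\theta)$ cancels by the same integration by parts that produced (\ref{ener}). The remaining nonlinearities split into (i) pure-tilde analogues of the right-hand side of (\ref{hh}), handled exactly as in the proof of Theorem~\ref{main1} via the decomposition (\ref{dde}) and the anisotropic inequalities in Section~\ref{secA}, and (ii) new bar/tilde cross terms such as $\int_\Omega\widetilde u_2\,\partial_2\bar\omega\,\widetilde\omega\,dx$ and $\int_\Omega\widetilde u_2\,\partial_{22}\bar\theta\,\partial_2\widetilde\theta\,dx$ coming from $u\cdot\nabla\bar\omega$ and $u\cdot\nabla\nabla\bar\theta$ respectively, estimated by pairing the $O(\varepsilon)$ bound on $\bar u,\bar\theta$ in $H^2$ with (\ref{pp}) to extract an absorbable $\|\partial_1\widetilde u\|$ or $\|\partial_1\nabla\widetilde\theta\|$ factor.

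Summing the $L^2$ and $H^1$ estimates and absorbing all nonlinearities for $\varepsilon$ small yields
$$
\frac{d}{dt}\bigl(\|\widetilde u\|_{H^1}^2+\|\widetilde\theta\|_{H^1}^2\bigr)+c_0\bigl(\|\partial_1\widetilde u\|_{H^1}^2+\|\partial_1\widetilde\theta\|_{H^1}^2\bigr)\le 0,
$$
and applying (\ref{pp}) to $\widetilde u,\widetilde\theta$ and their first derivatives (still zero-mean in $x_1$) bounds the dissipation from below by a multiple of $\mathcal{E}(t)$; Gronwall then delivers the exponential decay, with the initial size controlled by $\|\widetilde u_0\|_{H^1}+\|\widetilde\theta_0\|_{H^1}\le\|u_0\|_{H^1}+\|\theta_0\|_{H^1}$. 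The main obstacle I anticipate is the systematic bookkeeping for the bar/tilde cross terms at the $H^1$ level: each one must carry both an $\varepsilon$-smallness factor (from the a~priori $H^2$ control of $\bar u,\bar\theta$) \emph{and} at least one $\partial_1$ on an oscillation so that Poincar\'e can convert it into something absorbable by the horizontal dissipation; losing either feature on a single term would obstruct closing the differential inequality.
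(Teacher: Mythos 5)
Your proposal is correct and follows essentially the same route as the paper: derive the oscillation system \eqref{tilde-boussinesq}, perform $L^2$ and $H^1$ energy estimates on $(\widetilde u,\widetilde\theta)$, exploit the cancellations $\bar u_2\equiv 0$, $(\widetilde\theta e_2,\widetilde u)-(\widetilde u_2,\widetilde\theta)=0$ and the orthogonality of the bar/tilde parts, absorb the nonlinear terms using the $H^2$ smallness from Theorem \ref{main1}, and finish with the strong Poincar\'e inequality \eqref{poincare1} and Gronwall. The only (immaterial) differences are that you phrase the $H^1$ step via $\widetilde\omega$ rather than $\nabla\widetilde u$ and bound the cross terms with a 1D $L^\infty_{x_2}$ embedding of $\p_2\bar u$, $\p_2\bar\theta$ where the paper uses the anisotropic trilinear inequality of Lemma \ref{goo4}.
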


As a special consequence of this decay result, the solution $(u, \theta)$ approaches the horizontal average 
$(\bar u, \bar \theta)$ asymptotically, and the Boussinesq system  (\ref{B1}) evolves to the following 1D system 
$$
\begin{cases}
	\partial_{t} \bar{u}+\overline{u\cdot \nabla \widetilde{u}}+\left(\begin{array}{c}
		{0} \\
		{\partial_{2} \bar{p}}
	\end{array}\right)
	=\left(\begin{array}{c}
		{0} \\
		{\bar{\theta}}
	\end{array}\right), \\
	\partial_t \bar{\theta}+\overline{u\cdot \nabla \widetilde{\theta}}=0,
\end{cases}
$$
as given in (\ref{bar-boussinesq}). The proof of Theorem \ref{main2} starts with the system 
governing the oscillation $(\widetilde u, \widetilde \theta)$,
$$
\begin{cases}
	\p_{t} \widetilde{u}+\widetilde{u\cdot \nabla \widetilde{u}}+u_2\p_2\bar{u}-\nu\p_1^2\widetilde{u}+\nabla\widetilde{p}
	=\widetilde{\theta}e_2,\\
	\p_{t} \widetilde{\theta}+\widetilde{u\cdot \nabla \widetilde{\theta}}+u_2\p_2\bar{\theta}-{\kappa\p_1^2\widetilde{\theta}}+\widetilde{u}_2
	=0.
\end{cases}
$$
By performing separate energy estimates for $\|(\widetilde u, \widetilde \theta)\|_{L^2}^2$ 
and $\|(\na \widetilde u, \na\widetilde \theta)\|_{L^2}^2$ and carefully evaluating 
the nonlinear terms with the strong Poincar\'{e}
type inequality and other anisotropic tools, we are able to establish the inequality 
\ben\label{pq}
\frac{d}{dt} \|(\widetilde u, \widetilde \theta)\|_{H^1}^2 + (2\nu -C_1\,\|(u, \theta)\|_{H^2}) \,\|\p_1\widetilde u\|_{H^1}^2 \notag \\
 + (2\kappa -C_1\,\|(u, \theta)\|_{H^2}) \,\|\p_1\widetilde \theta\|_{H^1}^2 \le 0.
\een
When the initial data $(u_0, \theta_0)$ is taken to be sufficiently small in $H^2$, say 
$$
\|(u_0, \theta_0)\|_{H^2} \le \varepsilon
$$
for sufficiently small $\varepsilon>0$, then $\|(u, \theta)\|_{H^2} \le C_0\, \varepsilon$ and
$$
2\nu -C_1\,\|(u, \theta)\|_{H^2} \ge \nu, \qquad 2\eta -C_1\,\|(u, \theta)\|_{H^2} \ge \eta.
$$
Applying the strong Poincar\'{e} inequality to (\ref{pq}) yields the desired exponential decay. 

\vskip .1in 
The rest of this paper is divided into three sections. Section \ref{secA} serves as a preparation. It presents several anisotropic inequalities and some fine properties related to the orthogonal decomposition. Section 
\ref{sec3} proves Theorem \ref{main1} while Section \ref{sec4} is devoted to verifying Theorem \ref{main2}.

\vskip .3in 
\section{Anisotropic inequalities}
\label{secA}

This section presents several anisotropic inequalities to be used extensively in the proofs 
of Theorem \ref{main1} and  Theorem \ref{main2}. In addition, several key properties of the horizontal average 
and the corresponding oscillation are also listed for the convenience of later applications.  
	
\vskip .1in 
We first recall the horizontal average and the corresponding orthogonal decomposition. For any function 
$f=f(x_1, x_2)$ that is integrable in $x_1$ over the 1D periodic box  $\mathbb T  =[0, 1]$, its horizontal 
average $\bar f$ is given by 
\beq\label{fbar}
	\bar f (x_2) = \int_{\mathbb T} f(x_1, x_2) \,dx_1. 
\eeq 	
We decompose $f$ into $\bar f$ and the corresponding oscillation portion $\widetilde f$, 
\beq\label{de}
 f = \bar f + \widetilde f. 
\eeq 	
The following lemma collects a few properties of $\bar f$ and $\widetilde f$ to be used in the subsequent 
sections. These properties can be easily verified via (\ref{fbar}) and (\ref{de}). 	
	
\begin{lem}\label{goo1}
	Assume that the 2D function $f$ defined on $\Omega=\mathbb T\times \mathbb R$ is sufficiently regular, say 
	$f\in H^2(\Omega)$. Let $\bar f$ and $\widetilde f$ be defined as in (\ref{fbar}) and (\ref{de}). 
	\begin{enumerate}
		\item[(a)] $\bar f$ and $\widetilde f$ obey the following basic properties, 
			$$
		\overline{\p_1 f} = \p_1 \bar f = 0, \quad  \overline{\p_2 f}  = \p_2 \bar f, \quad\overline{\widetilde f} =0, \quad  \widetilde{\p_2 f}  = \p_2 \widetilde f.
		$$ 
		\item[(b)] If $f$ is a divergence-free vector field, namely $\na\cdot f=0$, then $\bar f$ and 
		$\widetilde f$ are also divergence-free, 
		$$
		\na\cdot \bar f =0\quad\mbox{and}\quad \na\cdot \widetilde f=0.
		$$
		\item[(c)] $\bar f$ and $\widetilde f$ are orthogonal in $L^2$, namely 
		$$
		(\bar f, \widetilde f) := \int _{\Omega} \bar f\, \widetilde f\,dx = 0, \quad \|f\|_{L^2(\Omega)}^2 = \|\bar f\|_{L^2(\Omega)}^2 + \|\widetilde f\|_{L^2(\Omega)}^2.
		$$
		In particular, $\|\bar f\|_{L^2} \le \|f\|_{L^2}$ and $\|\widetilde f\|_{L^2} \le \|f\|_{L^2}$.
	\end{enumerate}
\end{lem}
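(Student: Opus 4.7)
The plan is to verify each of the three statements directly from the definitions \eqref{fbar} and \eqref{de}, with the periodicity of $f$ in $x_1$ and Fubini's theorem doing essentially all the work. Since $f\in H^2(\Omega)$, swapping integration in $x_1$ with $\p_2$, and integrating $\p_1 f$ against $1$ over $\mathbb T$, are both justified, so no regularity issues will arise.

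For part (a), I would first compute $\overline{\p_1 f}(x_2) = \int_{\mathbb T} \p_1 f(x_1,x_2)\,dx_1$, which vanishes as a boundary term by periodicity in $x_1$. The identity $\p_1 \bar f = 0$ is immediate because $\bar f$ depends only on $x_2$. For the second identity I would pull $\p_2$ out of the $x_1$-integral. The vanishing of $\overline{\widetilde f}$ follows by linearity of the average together with the trivial observation $\overline{\bar f} = \bar f$ (valid since $|\mathbb T|=1$ and $\bar f$ is $x_1$-independent). Finally, $\widetilde{\p_2 f} = \p_2 f - \overline{\p_2 f} = \p_2 f - \p_2 \bar f = \p_2 \widetilde f$ by combining the previous identities.

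For part (b), I would apply the horizontal average to $\na\cdot f = \p_1 f_1 + \p_2 f_2 = 0$. Part (a) gives $\overline{\p_1 f_1} = 0$ and $\overline{\p_2 f_2} = \p_2 \bar f_2$, and since $\p_1 \bar f_1 = 0$ as well, I conclude $\na\cdot\bar f = \p_1 \bar f_1 + \p_2 \bar f_2 = 0$. Subtracting then yields $\na\cdot \widetilde f = \na\cdot f - \na\cdot \bar f = 0$.

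For part (c), I would write $(\bar f, \widetilde f) = \int_{\mathbb R} \bar f(x_2) \bigl(\int_{\mathbb T} \widetilde f(x_1,x_2)\,dx_1\bigr)\,dx_2$ and observe that the inner integral equals $\overline{\widetilde f}(x_2)=0$ by part (a), giving orthogonality. The Pythagorean identity $\|f\|_{L^2}^2 = \|\bar f\|_{L^2}^2 + \|\widetilde f\|_{L^2}^2$ then follows by expanding $\|\bar f + \widetilde f\|_{L^2}^2$ and using the just-proved orthogonality. There is no real obstacle in this lemma; the only point worth emphasizing is that every statement ultimately rests on the periodicity of $f$ in $x_1$, which kills the boundary term in $\overline{\p_1 f}$ and is what distinguishes the $\mathbb T\times\mathbb R$ setting from $\mathbb R^2$.
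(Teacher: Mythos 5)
Your proposal is correct and follows essentially the same route as the paper: all three parts are verified directly from the definitions, with the average of the divergence equation giving (b) and the vanishing of $\overline{\widetilde f}$ giving the orthogonality in (c). You simply spell out part (a) in more detail than the paper, which dismisses it as immediate.
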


\begin{proof}
(a) follows from the definitions of $\bar{f}$ and $\widetilde{f}$ directly. If $\nabla \cdot f=0$, then
\begin{equation*}
0=\overline{\partial_1f+\p_2f}=\p_1\bar{f}+\p_2\bar{f}=\nabla\cdot\bar{f}=\nabla\cdot f-\nabla\cdot\widetilde{f}=-\nabla\cdot\widetilde{f},
\end{equation*}
which gives (b). For (c), according to the definitions of $\bar{f}$ and $\widetilde{f}$,
\begin{equation*}
(\bar{f},\widetilde{f})=\int_\Omega\bar{f}\widetilde{f}~dx=\int_\R\bar{f}\bigg(\int_{\mathbb{T}}f(x_1,x_2)~dx_1\bigg)~dx_2-\int_\R|\bar{f}|^2~dx_2=0.
\end{equation*}
This completes the proof of Lemma \ref{goo1}.
\end{proof}
	
\vskip .1in 
We now present several anisotropic inequalities. Basic 1D inequalities play a role 
in these anisotropic  inequalities. We emphasize that 1D inequalities on the whole line $\mathbb R$ 
are not always the same as the corresponding ones on bounded domains including periodic domains. For any 
1D function $f \in H^1(\mathbb R)$, 
\beq\label{gg1}
\|f\|_{L^\infty(\mathbb R)} \le \sqrt{2} \|f\|_{L^2(\mathbb R)}^{\frac12} \,  \|f'\|_{L^2(\mathbb R)}^{\frac12}. 
\eeq
For a bounded domain such as $\mathbb T$ and $f\in H^1(\mathbb T)$, 
\beq\label{gg2}
\|f\|_{L^\infty(\mathbb T)} \le \sqrt{2} \,\|f\|^{\frac12}_{L^2(\mathbb T)} \,\|f'\|^{\frac12}_{L^2(\mathbb T)} + \|f\|_{L^2(\mathbb T)}.
\eeq	
However, if a function has mean zero such as the oscillation part $\widetilde f$, 
the 1D inequality for $\widetilde f$ is the same as the whole line case, that is, for $\widetilde f\in H^1(\mathbb T)$, 
\beq\label{w2}
\|\widetilde f\|_{L^\infty(\mathbb T)} \le C\, \|\widetilde f\|^{\frac12}_{L^2(\mathbb T)}\, 
\| (\widetilde f)' \|^{\frac12}_{L^2(\mathbb T)}.
\eeq
These basic inequalities are incorporated into the anisotropic inequalities stated in the following lemmas. 

\begin{lem} \label{w1}
	Let $\Omega = \mathbb T\times \mathbb R$. For any $f,g,h\in L^2(\Omega)$ with $\p_1 f\in L^2(\Omega)$ and 
	$\p_2 g \in L^2(\Omega)$, then 
	\ben\label{ani}
	\left|\int_{\Omega} f \, g\, h\,dx \right| \le C\, \|f\|_{L^2}^{\frac12} \left(\|f\|_{L^2} +  \|\p_1f\|_{L^2}\right)^{\frac12} \|g\|_{L^2}^{\frac12}\, \|\p_2 g\|_{L^2}^{\frac12}\, \|h\|_{L^2}.
	\een
	For any $f\in H^2(\Omega)$, we have 
	\ben\label{2gg}
	\|f\|_{L^\infty(\Omega)} &\le& C\, \|f\|^{\frac14}_{L^2(\Omega)}\left(\|f\|_{L^2(\Omega)} + \|\p_1f\|_{L^2(\Omega)}\right)^{\frac14} \|\p_2f\|^{\frac14}_{L^2(\Omega)}\notag\\
	&& \times \left(\|\p_2f\|_{L^2(\Omega)} + \|\p_1\p_2f\|_{L^2(\Omega)}\right)^{\frac14}.
	\een
\end{lem}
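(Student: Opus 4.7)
The plan is to derive both estimates by slicing $\Omega = \mathbb T\times\mathbb R$ and reducing to the one-dimensional inequalities (\ref{gg1}) and (\ref{gg2}) already in hand: the whole-line version in the $x_2$-direction and the periodic version in the $x_1$-direction. Throughout, the only technical point I anticipate is carefully carrying the lower-order $L^2$ correction term from (\ref{gg2}); this is exactly what produces the sum $\|f\|_{L^2}+\|\p_1 f\|_{L^2}$ in both target inequalities in place of the cleaner $\|\p_1 f\|_{L^2}$ that the whole-line inequality would give.

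For (\ref{ani}), I would first apply H\"older's inequality in $x_1$, placing $f$ in $L^\infty_{x_1}$ and pairing $g,h$ in $L^2_{x_1}$, which yields
\[
\int_\Omega |fgh|\,dx \le \int_{\mathbb R} \|f(\cdot,x_2)\|_{L^\infty_{x_1}}\,\|g(\cdot,x_2)\|_{L^2_{x_1}}\,\|h(\cdot,x_2)\|_{L^2_{x_1}}\,dx_2.
\]
Then I would pull out $M:=\sup_{x_2}\|g(\cdot,x_2)\|_{L^2_{x_1}}$ and apply Cauchy--Schwarz in $x_2$ to the remaining $f$--$h$ product. The supremum $M$ is controlled by differentiating $\phi(x_2):=\|g(\cdot,x_2)\|_{L^2_{x_1}}^2 = \int_{\mathbb T}g^2\,dx_1$ in $x_2$; since $\phi\in L^1(\mathbb R)$ there is a sequence $x_2^n\to -\infty$ on which $\phi$ vanishes, so integrating $\phi'=2\int_{\mathbb T} g\,\p_2 g\,dx_1$ and using Cauchy--Schwarz delivers $M^2\le 2\|g\|_{L^2}\|\p_2 g\|_{L^2}$. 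For the $f$-factor, I apply (\ref{gg2}) pointwise in $x_2$, square, and integrate, producing
\[
\int_{\mathbb R}\|f(\cdot,x_2)\|_{L^\infty_{x_1}}^2\,dx_2 \le C\,\|f\|_{L^2}\bigl(\|f\|_{L^2}+\|\p_1 f\|_{L^2}\bigr).
\]
Combining these ingredients yields (\ref{ani}) directly.

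For (\ref{2gg}), I would use double slicing. Applying the whole-line inequality (\ref{gg1}) in $x_2$ pointwise in $x_1$ gives
\[
|f(x_1,x_2)|^2 \le 2\,\|f(x_1,\cdot)\|_{L^2_{x_2}}\,\|\p_2 f(x_1,\cdot)\|_{L^2_{x_2}}.
\]
Setting $\psi(x_1):=\|f(x_1,\cdot)\|_{L^2_{x_2}}$ and $\chi(x_1):=\|\p_2 f(x_1,\cdot)\|_{L^2_{x_2}}$, the key observation is that $2\psi\psi'=2\int_{\mathbb R}f\,\p_1 f\,dx_2$ combined with Cauchy--Schwarz yields $|\psi'(x_1)|\le \|\p_1 f(x_1,\cdot)\|_{L^2_{x_2}}$. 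Hence $\|\psi\|_{L^2(\mathbb T)}=\|f\|_{L^2(\Omega)}$ and $\|\psi'\|_{L^2(\mathbb T)}\le \|\p_1 f\|_{L^2(\Omega)}$, with the analogous bounds for $\chi$ with $f$ replaced by $\p_2 f$. Applying the periodic inequality (\ref{gg2}) to $\psi$ and to $\chi$ separately gives $\|\psi\|_{L^\infty(\mathbb T)}^2 \le C\|f\|_{L^2}(\|f\|_{L^2}+\|\p_1 f\|_{L^2})$ and the corresponding bound for $\chi$; plugging these into the pointwise estimate above and taking the supremum in $x_1$ delivers exactly (\ref{2gg}).

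Beyond the bookkeeping for the periodic correction term noted in the opening paragraph, I do not foresee any serious obstacle: each step is a direct application of a one-dimensional tool or of Cauchy--Schwarz, and the two arguments share the same underlying slicing philosophy.
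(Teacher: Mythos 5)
Your proof is correct and follows essentially the same route as the paper: slice $\Omega$, apply the 1D inequalities (\ref{gg1}) in $x_2$ and (\ref{gg2}) in $x_1$, and carry the lower-order periodic correction term, which is exactly where the factors $\|f\|_{L^2}+\|\p_1 f\|_{L^2}$ come from. The only cosmetic difference is that where the paper interchanges mixed norms via Minkowski's inequality before applying the 1D bounds pointwise, you instead run direct fundamental-theorem-of-calculus/Cauchy--Schwarz arguments on the sliced norm functions $\phi$, $\psi$, $\chi$; both are equally valid.
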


\begin{proof}	
The upper bound for the triple product in (\ref{ani}) on $\mathbb R^2$ was stated and proven 
in \cite{caowu2011}, but (\ref{ani}) for the domain $\Omega$ includes an extra lower-order term. 
For the convenience of the readers, we provide the proofs of (\ref{ani}) and (\ref{2gg}). Applying
H\"{o}lder's inequality in each direction,  Minkowski's inequality, and (\ref{gg1}) and (\ref{gg2}), 
we have 
\begin{align*}
\left|\int_{\Omega} f \, g\, h\,dx \right| &\le \|f\|_{L^2_{x_2}L^\infty_{x_1}}\,\|g\|_{L^\infty_{x_2}L^2_{x_1}}\,\|h\|_{L^2}\\
&\le \|f\|_{L^2_{x_2}L^\infty_{x_1}}\,\|g\|_{L^2_{x_1}L^\infty_{x_2}}\,\|h\|_{L^2}\\
&\le  C\, \left\|\|f\|_{L^2_{x_1}}^{\frac12} \, \|\p_1 f\|_{L^2_{x_1}}^{\frac12} +\|f\|_{L^2_{x_1}}\right\|_{L^2_{x_2}}\\
&\quad \times  \left\|\|g\|_{L^2_{x_2}}^{\frac12} \, \|\p_2 g\|_{L^2_{x_2}}^{\frac12}\right\|_{L^2_{x_1}} \, \|h\|_{L^2}\\
&\le C\, \|f\|_{L^2}^{\frac12} \left(\|f\|_{L^2} +  \|\p_1f\|_{L^2}\right)^{\frac12} \|g\|_{L^2}^{\frac12}\, \|\p_2 g\|_{L^2}^{\frac12}\, \|h\|_{L^2}.
\end{align*}
Here $\|f\|_{L^2_{x_2}L^\infty_{x_1}}$ represents the $L^\infty$-norm in the $x_1$-variable, followed by the $L^2$-norm in the $x_2$-variable.
To prove (\ref{2gg}), we again use H\"{o}lder's inequality,  Minkowski's inequality, and (\ref{gg1}) and (\ref{gg2}),
\begin{align*}
\|f\|_{L^\infty_{x_1} L^\infty_{x_2}} &\le C\,  \left\|\|f\|_{L^2_{x_2}}^{\frac12} \, \|\p_2 f\|_{L^2_{x_2}}^{\frac12}\right\|_{L^\infty_{x_1} } \\
&\le C\, \left\|\|f\|_{L^\infty_{x_1}}\right\|^{\frac12}_{L^2_{x_2}}
\,\left\|\|\p_2 f\|_{L^\infty_{x_1}}\right\|^{\frac12}_{L^2_{x_2}}\\
&\le C\, \left\|\|f\|_{L^2_{x_1}}^{\frac12} \, \|\p_1 f\|_{L^2_{x_1}}^{\frac12} +\|f\|_{L^2_{x_1}} \right\|^{\frac12}_{L^2_{x_2}}\\
&\quad \times \left\|\|\p_2f\|_{L^2_{x_1}}^{\frac12} \, \|\p_1\p_2 f\|_{L^2_{x_1}}^{\frac12} +\|\p_2f\|_{L^2_{x_1}} \right\|^{\frac12}_{L^2_{x_2}} \\
&\le C\,   \|f\|^{\frac14}_{L^2}\left(\|f\|_{L^2} + \|\p_1f\|_{L^2}\right)^{\frac14} \|\p_2f\|^{\frac14}_{L^2}\notag\\
&\quad \times \left(\|\p_2f\|_{L^2} + \|\p_1\p_2f\|_{L^2}\right)^{\frac14}.
\end{align*}
This completes the proof of Lemma \ref{w1}. 
\end{proof}

\vskip .1in 
If we replace $f$ by the oscillation part $\widetilde f$, some of the lower-order parts in (\ref{ani}) and (\ref{2gg}) can be  dropped, as the following lemma states. 

\begin{lem} \label{goo4}
	Let $\Omega = \mathbb T\times \mathbb R$.  For any $f,g,h\in L^2(\Omega)$ with $\p_1 f\in L^2(\Omega)$ and 
	$\p_2 g \in L^2(\Omega)$, then 
	\ben
	\left|\int_{\Omega} \widetilde f \, g\, h\,dx \right| &\le& C\, \|\widetilde f\|_{L^2}^{\frac12}\, 
	\|\p_1\widetilde f\|_{L^2}^{\frac12}\, \|g\|_{L^2}^{\frac12}\, \|\p_2 g\|_{L^2}^{\frac12}\, \|h\|_{L^2}.
	\label{an2}
	\een
	For any $f\in H^2(\Omega)$, we have 
	$$
	\|\widetilde f\|_{L^\infty(\Omega)} \le C\, \|\widetilde f\|^{\frac14}_{L^2(\Omega)} \|\p_1\widetilde f\|_{L^2(\Omega)}^{\frac14} \,\|\p_2 \widetilde f\|^{\frac14}_{L^2(\Omega)}\, \|\p_1\p_2\widetilde f\|^{\frac14}_{L^2(\Omega)}.
	$$
\end{lem}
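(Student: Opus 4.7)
The plan is to mirror the proof of Lemma \ref{w1}, with the single but decisive change that whenever the 1D Sobolev embedding in the periodic variable $x_1$ is applied to $\widetilde f$, we use the mean-zero inequality (\ref{w2}) instead of (\ref{gg2}). This is legitimate because for a.e.\ fixed $x_2$ the function $x_1 \mapsto \widetilde f(x_1, x_2)$ has zero mean on $\mathbb T$ (by the definition of $\widetilde f$ together with Lemma \ref{goo1}(a)), and consequently the additive lower-order piece $\|f\|_{L^2(\mathbb T)}$ in (\ref{gg2}) drops out. This is precisely what accounts for the sharper form of (\ref{an2}) compared with (\ref{ani}), and analogously for the $L^\infty$ bound compared with (\ref{2gg}).

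For the triple-product estimate (\ref{an2}), I would first apply H\"older's inequality slice by slice, then use Minkowski to interchange the order of integration, reaching
$$
\left|\int_{\Omega} \widetilde f \, g\, h\,dx\right| \le \|\widetilde f\|_{L^2_{x_2} L^\infty_{x_1}}\,\|g\|_{L^2_{x_1} L^\infty_{x_2}}\,\|h\|_{L^2}.
$$
Next I would estimate $\|\widetilde f(\cdot, x_2)\|_{L^\infty_{x_1}}$ by the mean-zero inequality (\ref{w2}) and $\|g(x_1, \cdot)\|_{L^\infty_{x_2}}$ by the whole-line inequality (\ref{gg1}). Finally I would apply Cauchy--Schwarz in the remaining outer variable to extract the product $\|\widetilde f\|_{L^2}^{1/2}\|\partial_1 \widetilde f\|_{L^2}^{1/2}\,\|g\|_{L^2}^{1/2}\|\partial_2 g\|_{L^2}^{1/2}$.

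For the $L^\infty$ bound on $\widetilde f$, I would reproduce the two-step reduction used in (\ref{2gg}). First take $L^\infty_{x_2}$ after writing $\|\widetilde f(x_1, \cdot)\|_{L^\infty_{x_2}} \le C\, \|\widetilde f\|_{L^2_{x_2}}^{1/2}\|\partial_2 \widetilde f\|_{L^2_{x_2}}^{1/2}$ via (\ref{gg1}). Then apply Cauchy--Schwarz in $x_1$, use Minkowski to bring $L^\infty_{x_1}$ inside $L^2_{x_2}$, and use (\ref{w2}) on both $\widetilde f$ and $\partial_2 \widetilde f$ along $x_1$ (the latter is still mean-zero on $\mathbb T$ since differentiation in $x_2$ commutes with averaging in $x_1$, cf.\ Lemma \ref{goo1}(a)). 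A final Cauchy--Schwarz in $x_2$ assembles the four equal-weight factors $\|\widetilde f\|_{L^2}^{1/4}\|\partial_1 \widetilde f\|_{L^2}^{1/4}\|\partial_2 \widetilde f\|_{L^2}^{1/4}\|\partial_1\partial_2 \widetilde f\|_{L^2}^{1/4}$.

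There is no genuine analytic obstacle here beyond what has already been handled in Lemma \ref{w1}; the only point requiring care is the bookkeeping that justifies invoking (\ref{w2}) rather than (\ref{gg2}) at each appearance of $\widetilde f$ (and of $\partial_2\widetilde f$) in the $x_1$-variable, which relies on verifying mean-zero in $x_1$ fiberwise.
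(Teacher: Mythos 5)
Your proposal is correct and follows exactly the route the paper takes: it proves Lemma \ref{goo4} by repeating the argument of Lemma \ref{w1}, replacing the periodic 1D embedding (\ref{gg2}) with the mean-zero version (\ref{w2}) at each appearance of $\widetilde f$ (and of $\partial_2\widetilde f$) in the $x_1$-variable, which is precisely why the lower-order terms disappear. The fiberwise mean-zero verification you flag, including for $\partial_2\widetilde f$ via Lemma \ref{goo1}(a), is the only point of care and you handle it correctly.
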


	\begin{proof}	The two inequalities in this lemma can be shown similarly as those in
		Lemma \ref{w1}. The only modification here is to use (\ref{w2}) instead of (\ref{gg2}). Since
		(\ref{w2}) does not contain the lower-order part, the inequalities in this lemma do not have 
		the lower-order terms. 		
\end{proof}

\vskip .1in 
The next lemma assesses that the oscillation part $\widetilde f$ obeys a strong Poincar\'{e} 
type inequality with the upper bound in terms of $\p_1 \widetilde f$ instead of $\na\widetilde f$. 

\begin{lem} \label{goo2}
	Let {$f$ be a smooth function,} $\bar f$ and $\widetilde f$ be defined as in (\ref{fbar}) and (\ref{de}).  If $\|\p_1\widetilde f\|_{L^2(\Omega)} <\infty$, then 
	\ben\label{poincare1}
	\|\widetilde f\|_{L^2(\Omega)} \le C \|\p_1\widetilde f\|_{L^2(\Omega)}, 
	\een
	where $C$ is a pure constant. In addition, if $\|\p_1\widetilde f\|_{H^1(\Omega)} <\infty$, then 
	\ben\label{poincare2}
	\|\widetilde f\|_{L^\infty(\Omega)} \le C \|\p_1\widetilde f\|_{H^1(\Omega)}.
	\een
\end{lem}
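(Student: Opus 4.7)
The plan is to split the two inequalities, handling each by exploiting the mean-zero property $\int_{\mathbb T} \widetilde f(x_1,x_2)\,dx_1 = 0$ for every fixed $x_2$, which follows from Lemma \ref{goo1}(a).

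For the $L^2$ bound \eqref{poincare1}, I would argue slice-by-slice. For each fixed $x_2\in\mathbb R$, $\widetilde f(\cdot,x_2)$ is a mean-zero, periodic function on $\mathbb T=[0,1]$, so the classical one-dimensional Poincar\'e inequality for mean-zero functions gives
\begin{equation*}
\|\widetilde f(\cdot,x_2)\|_{L^2(\mathbb T)} \le C\,\|\p_1 \widetilde f(\cdot,x_2)\|_{L^2(\mathbb T)}
\end{equation*}
with an absolute constant $C$ (for instance $C = 1/(2\pi)$, via Fourier series on $[0,1]$ and the fact that only nonzero horizontal Fourier modes appear). Squaring this inequality and integrating in $x_2$ over $\mathbb R$ (using Tonelli's theorem) yields \eqref{poincare1}.

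For the $L^\infty$ bound \eqref{poincare2}, I would piggyback on Lemma \ref{goo4} applied to $\widetilde f$, which gives
\begin{equation*}
\|\widetilde f\|_{L^\infty(\Omega)} \le C\,\|\widetilde f\|_{L^2(\Omega)}^{1/4}\,\|\p_1\widetilde f\|_{L^2(\Omega)}^{1/4}\,\|\p_2\widetilde f\|_{L^2(\Omega)}^{1/4}\,\|\p_1\p_2\widetilde f\|_{L^2(\Omega)}^{1/4}.
\end{equation*}
Then I would apply the just-established \eqref{poincare1} twice: once to $\widetilde f$ itself, and once to $\p_2\widetilde f$, noting from Lemma \ref{goo1}(a) that $\p_2\widetilde f = \widetilde{\p_2 f}$ is again an oscillation part and therefore satisfies $\|\p_2\widetilde f\|_{L^2}\le C\|\p_1\p_2\widetilde f\|_{L^2}$. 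Substituting these two bounds collapses the four factors into
\begin{equation*}
\|\widetilde f\|_{L^\infty(\Omega)} \le C\,\|\p_1\widetilde f\|_{L^2(\Omega)}^{1/2}\,\|\p_1\p_2\widetilde f\|_{L^2(\Omega)}^{1/2} \le C\,\|\p_1\widetilde f\|_{H^1(\Omega)},
\end{equation*}
which is \eqref{poincare2}.

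Neither step should present a real obstacle: the first is just the 1D mean-zero Poincar\'e inequality on $\mathbb T$ fibered over $\mathbb R$, and the second reuses Lemma \ref{goo4} (which was itself proved from the mean-zero 1D Gagliardo--Nirenberg inequality \eqref{w2}). The only point that requires a bit of care is recognizing that the mean-zero property is preserved under $\p_2$, so that \eqref{poincare1} may be applied to $\p_2 \widetilde f$; this is guaranteed by $\widetilde{\p_2 f}=\p_2 \widetilde f$ from Lemma \ref{goo1}(a). It is also worth noting that the result would fail if $\widetilde f$ were replaced by a general $f$, because then a nonzero horizontal average would survive and the $\p_1$ derivative would not control it.
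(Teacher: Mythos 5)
Your proof is correct and follows essentially the same route as the paper: the $L^2$ bound \eqref{poincare1} is the 1D mean-zero Poincar\'e inequality on $\mathbb T$ applied slice-by-slice in $x_2$ (the paper simply derives it by hand, using the mean-value theorem to find a zero of $\widetilde f(\cdot,x_2)$ and then the fundamental theorem of calculus with H\"older, rather than citing Fourier series), and the $L^\infty$ bound rests on the same anisotropic interpolation combined with \eqref{poincare1}. The only cosmetic difference is that you invoke the $L^\infty$ inequality of Lemma \ref{goo4} and apply \eqref{poincare1} twice (to $\widetilde f$ and to $\p_2\widetilde f$), whereas the paper re-runs the underlying 1D estimates \eqref{w2} and \eqref{gg1} directly; both give $C\|\p_1\widetilde f\|_{L^2}^{1/2}\|\p_1\p_2\widetilde f\|_{L^2}^{1/2}\le C\|\p_1\widetilde f\|_{H^1}$.
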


\begin{proof}
For fixed $x_2\in \mathbb R$, 
\beno
\int_0^1 \widetilde{f}(x_1,x_2)~dx_1=0.
\eeno
According to the mean-value theorem, there exists $\eta\in [0,1]$ such that $\widetilde{f}(\eta,x_2)=0$.
Therefore, by H\"older inequality,
\beq\label{w3}
 |\widetilde{f}(x_1,x_2)| = \left|\int^{x_1}_{\eta}\p_{y_1}\widetilde{f}(y_1,x_2)~dy_1\right| \leq \bigg(\int^{1}_{0}(\p_{y_1}\widetilde{f}(y_1,x_2))^2~dy_1\bigg)^{\frac12}.
\eeq
Taking the $L^2$-norm of (\ref{w3}) over $\Omega$ yields
	\beno
\|\widetilde f\|_{L^2(\Omega)} \le C \|\p_1\widetilde f\|_{L^2(\Omega)}. 
\eeno
To prove \eqref{poincare2}, we use \eqref{gg2}, namely, for any fixed $x_2\in \R$, 
	\beno
\|\widetilde f(x_1,x_2)\|_{L^\infty_{x_1}(\mathbb{T})} \le C \|\widetilde f\|_{L^2_{x_1}(\mathbb{T})}^{\frac12} \|\p_1\widetilde f\|_{L^2_{x_1}(\mathbb{T})}^{\frac12}.
\eeno
Taking the $L^\infty$-norm in $x_2$,  and using \eqref{gg1} and \eqref{poincare1}, we find
	\beno
\|\widetilde f\|_{L^\infty(\Omega)} \le C \|\p_1\widetilde f\|_{H^1(\Omega)}.
\eeno
This  completes the proof of Lemma \ref{goo2}.
\end{proof}

As an application of Lemma \ref{goo2}, the inequality in (\ref{an2}) can be converted to 
	$$
	\left|\int_{\Omega} \widetilde f \, g\, h\,dx \right| \le C\, \|\p_1\widetilde f\|_{L^2}\, \|g\|_{L^2}^{\frac12}\, \|\p_2 g\|_{L^2}^{\frac12}\, \|h\|_{L^2}.
$$

\vskip .3in 
\section{Proof of Theorem \ref{main1}}
\label{sec3}

This section is devoted to the proof of Theorem \ref{main1}. Since the local well-posedness can be shown via 
standard methods such as Friedrichs' Fourier cutoff, our focus is on the global {\it a priori} bound on 
the solution in $H^2(\Omega)$. 

\vskip .1in 
The framework for proving the global $H^2$-bound is 
the bootstrapping argument (see, e.g., \cite[p.21]{Tao2006}). By selecting a suitable energy functional 
at the $H^2$-level, we devote our main efforts to showing that this energy functional obeys a desirable 
energy inequality. This process is lengthy and involves establishing suitable upper bounds for several 
nonlinear terms such as the one in (\ref{dde}). As described in the introduction, we invoke the orthogonal
decomposition $u = \bar u + \widetilde u$ and  $\theta = \bar \theta + \widetilde \theta$, apply
various anisotropic inequalities in the previous section and make use of the fine properties of 
$\widetilde u$ and $\widetilde \theta$. More details are given in the proof of Theorem \ref{main1}.

\vskip .1in 
\begin{proof}[Proof of Theorem \ref{main1}]
We define the 
natural energy functional, 
$$
E(t): = \sup_{0\le \tau\le t} \big(\| u(\tau)\|_{H^2}^2+\| \theta(\tau)\|_{H^2}^2\big) + \nu \int_0^t \|\p_1 u(\tau)\|_{H^2}^2 \,d\tau+\kappa \int_0^t \|\p_1 \theta(\tau)\|_{H^2}^2 \,d\tau.
$$
Our main efforts are devoted to proving that, for a constant $C$ uniform for all $t>0$, 
\ben \label{non}
E(t) \le E(0) + C\, E(t)^{2}+ C\, E(t)^{3}.
\een
Once \eqref{non} is established, the bootstrapping argument implies that, if 
	$$
	E(0) = \|u_0\|_{H^2}^2+\|\theta_0\|_{H^2}^2 \le \varepsilon^2, \quad \text{for}~\varepsilon^2< \min\{\frac{1}{16C},\frac{1}{8\sqrt C}\},  
	$$
then  $E(t)$ admits the desired uniform global bound $E(t)  \le C\, \varepsilon^2$. 	
To initiate the bootstrapping argument, we make the ansatz
\ben \label{ansatz}
E(t)\leq \min\{\frac{1}{4C}, \frac{1}{2\sqrt{C}}\}. 
\een 
\eqref{non} will allow us to conclude that $E(t)$ actually admits an even smaller bound. In fact, if \eqref{ansatz} holds, then \eqref{non} implies 
\beno
E(t) \le E(0) + \frac{1}{4} E(t)+ \frac{1}{4} E(t),
\eeno
or 
$$
E(t)\leq 2E(0)\leq 2\varepsilon^2 \le \frac12\, \min\{\frac{1}{4C}, \frac{1}{2\sqrt{C}}\}
$$
with the bound being half of the one in (\ref{ansatz}). 
The bootstrapping argument then asserts that $E(t)$ is bounded uniformly for all time, 
\beq\label{nn}
E(t)  \le C\, \varepsilon^2.
\eeq

Then we can deduce the global existence as well as the stability result from the global bound in (\ref{nn}). Now we show  (\ref{non}). A $L^2$-estimate yields
\ben
\|u(t)\|_{L^2}^2+	\|\theta(t)\|_{L^2}^2 +  2\nu \int_0^t \|\p_1 u(\tau)\|_{L^2}^2\,d\tau +2\kappa \int_0^t \|\p_1 \theta(\tau)\|_{L^2}^2 \,d\tau  \notag \\
 = \|u_0\|_{L^2}^2+\|\theta_0\|_{L^2}^2.~~\label{l2-sec3} 
\een 
To estimate the $H^1$-norm, we make use of the vorticity equation associated with the velocity equation in  \eqref{B1},
$$
\p_t \om + u\cdot\na \om =\p_1\theta,
$$ 
where $\omega = \na\times u$.
Taking the inner product of $(\om,\Delta \theta)$ with the equations of vorticity and temperature,  we have, due to $\nabla\cdot u=0$,
\ben
\frac12\frac{d}{dt} (\|\om(t)\|_{L^2}^2+\|\nabla\theta(t)\|_{L^2}^2) +  \nu \|\p_1  \om\|_{L^2}^2+ \kappa \|\p_1 \na \theta\|_{L^2}^2 \notag \\
   =- \int \na (u \cdot \na \theta)\cdot \na \theta\,dx := M,~~\label{h1-sec3}  
\een
where we have used 
$$
\int\p_1\theta\,\om\,dx=-\int\theta\,\p_1\om\,dx=-\int\theta\,\Delta u_2\,dx=-\int\Delta\theta\, u_2\,dx.
$$
We further write $M$ in \eqref{h1-sec3} as
 
\beno
M&=&-\int \na (u \cdot \na \theta)\cdot \na \theta\,dx \\
&=& -\sum_{i,j=1}^2 \int \p_j(u_i \p_i\theta)\, \p_j \theta\,dx\\
&=& -\sum_{i,j=1}^2 \int \p_j u_i \p_i\theta\, \p_j \theta\,dx -\sum_{i,j=1}^2 \int u_i 
\p_i\p_j\theta\, \p_j \theta\,dx\\
&=& - \int \p_1 u_1 \,\p_1\theta\,\p_1 \theta\,dx - \int \p_1 u_2 \,\p_2\theta\,\p_1 \theta\,dx \\
&\quad& - \int \p_2 u_1\, \p_1\theta\,\p_2 \theta\,dx - \int \p_2 u_2 \,\p_2\theta\,\p_2 \theta\,dx \\
&:=& M_1+M_2+M_3+M_4.
\eeno
Here,  due to $\na\cdot u=0$, we have used   
$$
\sum_{i,j=1}^2 \int u_i \p_i\p_j\theta\, \p_j \theta\,dx =0. 
$$
 
By Lemma \ref{goo1}, Lemma \ref{goo4}, Lemma \ref{goo2} and Young's inequality,
\beno
M_1
&=& - \int \p_1 \widetilde{u}_1\, \p_1\widetilde{\theta}\,\p_1 \theta\,dx\\
&\leq& C \|\p_1 \theta\|_{L^2}\|\p_1\widetilde{\theta}\|_{L^2}^{\frac12}\|\p_1\p_1\widetilde{\theta}\|_{L^2}^{\frac12}\|\p_1\widetilde{u}_1\|_{L^2}^{\frac12}\|\p_2\p_1\widetilde{u}_1\|_{L^2}^{\frac12}\\
&\leq& C\|\p_1 \theta\|_{L^2}^2(\|\p_1{u}\|_{L^2}^2+\|\p_1{\theta}\|_{L^2}^2) +\delta(\|\p_1{\nabla u}\|_{L^2}^2+\|\p_1{\nabla\theta}\|_{L^2}^2),
\eeno
where $\delta>0$ is a small fixed constant to be specified later. 
Similarly, 
\beno
M_2
&=& - \int \p_1 \widetilde{u}_2\, \p_2\theta\,\p_1 \widetilde{\theta}\,dx\\
&\leq& C \|\p_2 \theta\|_{L^2}\|\p_1\widetilde{\theta}\|_{L^2}^{\frac12}\|\p_1\p_1\widetilde{\theta}\|_{L^2}^{\frac12}\|\p_1\widetilde{u}_2\|_{L^2}^{\frac12}\|\p_2\p_1\widetilde{u}_2\|_{L^2}^{\frac12}\\
&\leq& C\|\p_2 \theta\|_{L^2}^2(\|\p_1{u}\|_{L^2}^2+\|\p_1{\theta}\|_{L^2}^2) +\delta(\|\p_1{\nabla u}\|_{L^2}^2+\|\p_1{\nabla\theta}\|_{L^2}^2).
\eeno
To deal with $M_3$, we invoke the decompositions $u = \bar u + \widetilde u$ and  $\theta = \bar \theta + \widetilde \theta$
to write it into four terms,
\beno
M_3
&=& - \int \p_2 u_1 \,\p_1\theta\,\p_2 \theta\,dx \\
&=&- \int \p_2 \bar{u}_1\, \p_1\widetilde{\theta}\,\p_2 \bar{\theta}\,dx- \int \p_2 \bar{u}_1\, \p_1\widetilde{\theta}\,\p_2 \widetilde{\theta}\,dx \\
&\quad&- \int \p_2 \widetilde{u}_1 \,\p_1\widetilde{\theta}\,\p_2 \bar{\theta}\,dx- \int \p_2 \widetilde{u}_1\, \p_1\widetilde{\theta}\,\p_2 \widetilde{\theta}\,dx \\
&:=& M_{31}+M_{32}+M_{33}+M_{34}.
\eeno
According to Lemma \ref{goo1}, it is easy to see $M_{31}=0$. To bound $M_{32}$ and $M_{33}$, we use Lemma \ref{goo1}, Lemma \ref{goo4}, Lemma \ref{goo2} and Young's inequality to obtain
\beno
M_{32}
&=& - \int \p_2 \bar{u}_1\, \p_1\widetilde{\theta}\,\p_2 \widetilde{\theta}\,dx \\
&\leq& C \|\p_2 \bar{u}_1\|_{L^2}\|\p_1\widetilde{\theta}\|_{L^2}^{\frac12}\|\p_2\p_1\widetilde{\theta}\|_{L^2}^{\frac12}\|\p_2{\widetilde{\theta}}\|_{L^2}^{\frac12}\|\p_1\p_2{\widetilde{\theta}}\|_{L^2}^{\frac12}\\
&\leq& C \|\p_2 {u}\|_{L^2}\|\p_1\widetilde{\theta}\|_{L^2}^{\frac12}\|\p_1\p_2{\widetilde{\theta}}\|_{L^2}^{\frac32}\\
&\leq& C\|\p_2 u\|_{L^2}^4\|\p_1\theta\|_{L^2}^2+\delta\|\p_1{\nabla\theta}\|_{L^2}^2
\eeno
and 
\beno
M_{33}
&=& - \int \p_2 \widetilde{u}_1\, \p_1\widetilde{\theta}\,\p_2 \bar{\theta}\,dx \\
&\leq& C \|\p_2 \bar{\theta}\|_{L^2}\|\p_2\widetilde{u}_1\|_{L^2}^{\frac12}\|\p_1\p_2\widetilde{u}_1\|_{L^2}^{\frac12}\|\p_1\widetilde{\theta}\|_{L^2}^{\frac12}\|\p_2\p_1\widetilde{\theta}\|_{L^2}^{\frac12}\\
&\leq& C \|\p_2 {\theta}\|_{L^2}\|\p_1{\theta}\|_{L^2}^{\frac12}\|\p_1\p_2{\widetilde{u}_1}\|_{L^2}\|\p_1\p_2{\widetilde{\theta}}\|_{L^2}^{\frac12}\\
&\leq& C\|\p_2 \theta\|_{L^2}^4\|\p_1\theta\|_{L^2}^2+\delta(\|\p_1{\nabla u}\|_{L^2}^2+\|\p_1{\nabla\theta}\|_{L^2}^2).
\eeno
$M_{34}$ can be similarly bounded as $M_{32}$. For $M_4$, we also write it as 
\beno
M_4
&=&  \int \p_1 u_1\,\p_2\theta\,\p_2 \theta\,dx \\
&=& 2\int \p_1 \widetilde{u}_1\, \p_2\widetilde{\theta}\,\p_2 \bar{\theta}\,dx+ \int \p_1 \widetilde{u}_1\, \p_2\widetilde{\theta}\,\p_2 \widetilde{\theta}\,dx \\
&:=& M_{41}+M_{42}.
\eeno
Similar as $M_{33}$, we can bound $M_{41}$ and $M_{42}$ by
\beno
M_{41},~M_{42}
\leq C\|\p_2 \theta\|_{L^2}^4\|\p_1u\|_{L^2}^2+\delta(\|\p_1{\nabla u}\|_{L^2}^2+\|\p_1{\nabla\theta}\|_{L^2}^2).
\eeno
Collecting the estimates for $M$ and taking $\delta>0$ to be small, say 
$$
\delta \le \frac1{16} \min\{\nu, \eta\}, 
$$
we obtain 
\ben
&&\quad\|\om(t)\|_{L^2}^2+\|\nabla\theta(t)\|_{L^2}^2 + \nu\int_0^t \|\p_1  \om(\tau)\|_{L^2}^2\tau+\kappa\int_0^t  \|\p_1 \na \theta(\tau)\|_{L^2}^2\,d\tau \notag \\
&&\leq C\int_0^t (\|\nabla \theta\|_{L^2}^2+\|\nabla \theta\|_{L^2}^4+\|\nabla u\|_{L^2}^4)\times(\|\p_1u\|_{L^2}^2+\|\p_1\theta\|_{L^2}^2)\,d\tau. 
\label{h1'-sec3}
\een
Next we estimate the $H^2$-norm of $(u, \theta)$. We take the inner 
product of $\Delta\om$ and $\Delta^2\theta$ with  
the equations of vorticity and temperature, respectively. Due to $\nabla\cdot u=0$ and after integrating by parts, we have
\ben
&&\quad\frac12\frac{d}{dt} (\|\nabla\om(t)\|_{L^2}^2+\|\Delta\theta(t)\|_{L^2}^2) +  \nu \|\p_1  \nabla\om\|_{L^2}^2+ \kappa \|\p_1 \Delta \theta\|_{L^2}^2 \notag\\
&&=- \int \na \om \cdot \na u \cdot \na \om\,dx- \int \Delta(u \cdot \na \theta) \Delta \theta\,dx .\label{h2-sec3}
\een
For the first term on the right hand side, we can decompose it as
\ben
N&:=&- \int \na \om \cdot \na u \cdot \na \om\,dx\notag\\
 &=& -\int \p_1 u_1 \, (\p_1\om)^2\,dx - \int \p_1 u_2\, \p_1\om \, \p_2\om\,dx \notag \\
&& - \int \p_2 u_1\,\p_1\om \, \p_2\om\,dx - \int \p_2 u_2\, (\p_2 \om)^2\,dx\notag\\
&:=& N_1+ N_2+N_3+N_4.\notag
\een
$N_1$ and $N_2$ can be bounded directly. According to Lemma \ref{goo1}, $\p_1 \bar u=0$ and $\p_1 u= \p_1 \widetilde u$. By Lemma \ref{goo4}, 
\beno 
N_1 &=& -\int \p_1 u_1 \, \p_1\om\,\p_1\widetilde{\om}\,dx\\
&\le& C\, \|\p_1 u_1\|_{L^2}^{\frac12} \,\|\p_1 \p_1 u_1\|_{L^2}^{\frac12} 
\,\|\p_1 \om\|^{\frac12}_{L^2}\, \|\p_2\p_1\om\|_{L^2}^{\frac12}\,\|\p_1\widetilde\om\|_{L^2}\\
&\le& C\, \|\p_1 u\|_{H^1} \, \|\p_1 u\|_{H^2} \|\p_1\nabla\om\|_{L^2}\\
&\le& C\, \|\p_1 u\|_{H^1}^2 \, \|\p_1 u\|_{H^2}^2+\delta' \|\p_1\nabla\om\|_{L^2}^2\\
\eeno 
and 
\beno 
N_2 &\le&  C\, \|\p_1 u_2\|_{L^2}^{\frac12} \,\|\p_1 \p_1 u_2\|_{L^2}^{\frac12} 
\,\|\p_1 \widetilde\om\|^{\frac12}_{L^2}\, \|\p_2\p_1\widetilde\om\|_{L^2}^{\frac12}\,\|\p_2\om\|_{L^2}\\
&\le& C\, \|u\|_{H^2}^2 \, \|\p_1 u\|^2_{H^2}+\delta' \|\p_1\nabla\om\|_{L^2}^2,
\eeno 
where $\delta'>0$ is a small but fixed parameter. The estimate of $N_3$ is slightly more delicate. 
\beno 
N_3 &=& - \int  \p_2 u_1\,\p_1\om \, \p_2\om\,dx \\
&=& - \int \p_2 (\bar u_1 + \widetilde u_1) \, \p_1 \widetilde \om\, \p_2(\bar \om + \widetilde \om)\,dx \\
&=& - \int \p_2\bar u_1\,	\, \p_1 \widetilde \om\,\p_2 \bar \om\,dx - \int \p_2\bar u_1\,	\, \p_1 \widetilde \om\,\p_2 \widetilde \om\,dx \\
&&- \int \p_2\widetilde u_1\,	\, \p_1 \widetilde \om\,\p_2 \bar \om\,dx - \int \p_2\widetilde u_1\,	\, \p_1 \widetilde \om\,\p_2 \widetilde \om\,dx\\
&:=& N_{31} + N_{32} + N_{33} + N_{34}. 
\eeno 
The first term $N_{31}$ is clearly zero, 
$$
N_{31} = - \int_{\mathbb R}  \p_2\bar u_1\,\p_2 \bar \om\,\int_{\mathbb T} \p_1 \widetilde \om_1\,dx_1\,dx_2 =0.
$$
To bound $N_{32}$ and $N_{33}$, we first use (\ref{an2}) of Lemma \ref{goo4} and then Lemma \ref{goo2} to obtain 
\beno 
N_{32} &\le& C\, \|\p_2 \bar u_1\|_{L^2} \, \|\p_1 \widetilde \om\|_{L^2}^{\frac12} \, \|\p_2\p_1 \widetilde \om\|_{L^2}^{\frac12}\, \|\p_2 \widetilde \om\|_{L^2}^{\frac12}   \|\p_1\p_2 \widetilde \om\|_{L^2}^{\frac12} \\
&\le&\,C\, \|\p_2 \bar u_1\|_{L^2} \,\|\p_1 \widetilde \om\|_{L^2}^{\frac12} \,\|\p_1\p_2 \widetilde \om\|_{L^2}^{\frac32} \\
&\le& C\, \|u\|_{H^1}^4 \, \|\p_1 u\|_{H^2}^2+\delta' \|\p_1\nabla\om\|_{L^2}^2
\eeno 
and 
\beno
N_{33} &\le& C\, \|\p_2 \bar \om\|_{L^2}\, \|\p_2 \widetilde u_1\|^{\frac12}_{L^2} \, \|\p_1 \p_2 \widetilde u_1\|^{\frac12}_{L^2}\,\|\p_1 \widetilde \om\|_{L^2}^{\frac12} \, \|\p_2\p_1 \widetilde \om\|_{L^2}^{\frac12}\\
&\le& C\, \|u\|_{H^2}^2 \, \|\p_1 u\|_{H^2}^2+\delta' \|\p_1\nabla\om\|_{L^2}^2,
\eeno 
$N_{34}$ can be similarly bounded as $N_{32}$. $N_4$ can also be bounded similarly. 
\beno
N_4 &=& - \int \p_1 \widetilde u_1 \, (\p_2 \bar \om + \p_2\widetilde \om)^2\,dx \\
&=& -2 \int \p_1 \widetilde u_1 \,\p_2 \bar \om\,\p_2\widetilde \om\,dx - \int \p_1 \widetilde u_1 \, (\p_2\widetilde \om)^2 \,dx\\
&\le& C\, (\|\p_2 \bar \om\|_{L^2} + \|\p_2 \widetilde \om\|_{L^2})\|\p_1 \widetilde u_1\|_{L^2}^{\frac12}\, 
\|\p_2\p_1 \widetilde u_1\|_{L^2}^{\frac12}\, \|\p_2\widetilde \om\|_{L^2}^{\frac12} \, \|\p_1\p_2\widetilde \om\|_{L^2}^{\frac12} \\
&\le& C\,\|u\|_{H^2}^2 \,\|\p_1 u\|_{H^2}^2+\delta' \|\p_1\nabla\om\|_{L^2}^2.
\eeno 
Thus we have shown that 
\beq 
|N| \le C\, \,(\|u\|_{H^2}^2+\|u\|_{H^2}^4) \,\|\p_1 u\|_{H^2}^2+6\delta' \|\p_1\nabla\om\|_{L^2}^2.\label{nb-sec3}
\eeq
For the last term of \eqref{h2-sec3}, we can write it as 
\beno
- \int \Delta(u \cdot \na \theta)\, \Delta \theta\,dx \hspace{-.5cm} &&= - \int \Delta u \cdot \na \theta\,\Delta \theta\,dx- 2\int \nabla u \cdot \na^2 \theta \,\Delta \theta\,dx\\
&&=- \int \Delta u_1\, \p_1  \theta\, \Delta \theta\,dx- \int \Delta u_2 \,\p_2  \theta\, \Delta \theta\,dx\\
&&\quad- 2\int \p_1 u_1 \,\p_1\p_1 \theta \,\Delta \theta\,dx- 2\int \p_1 u_2 \,\p_2\p_1 \theta \,\Delta \theta\,dx\\
&&\quad- 2\int \p_2 u_1 \,\p_1\p_2 \theta \,\Delta \theta\,dx- 2\int \p_2 u_2 \,\p_2\p_2 \theta \,\Delta \theta\,dx\\
&&:= P_1+P_2+P_3+P_4+P_5+P_6.
\eeno
According to Lemma \ref{goo1}, we can divide $P_1$ into four terms, 
\beno
P_1&=& - \int \Delta \bar{u}_1\, \p_1  \widetilde{\theta} \,\Delta \bar{\theta}\,dx- \int \Delta \bar{u}_1 \,\p_1  \widetilde{\theta} \,\Delta \widetilde{\theta}\,dx\\
&\quad&- \int \Delta \widetilde{u}_1 \,\p_1  \widetilde{\theta} \,\Delta \bar{\theta}\,dx- \int \Delta \widetilde{u}_1 \,\p_1  \widetilde{\theta} \,\Delta \widetilde{\theta}\,dx\\
&:=& P_{11}+P_{12}+P_{13}+P_{14}.
\eeno
It is clear that $P_{11}=0$. For $P_{12}$, we can bound it using Lemma \ref{goo1}, Lemma \ref{goo4} and Lemma \ref{goo2},
\beno
P_{12}
&\leq& C \|\Delta \bar{u}_1\|_{L^2}\|\p_1\widetilde{\theta}\|_{L^2}^{\frac12}\|\p_2\p_1\widetilde{\theta}\|_{L^2}^{\frac12}\|\Delta\widetilde{\theta}\|_{L^2}^{\frac12}\|\p_1\Delta\widetilde{\theta}\|_{L^2}^{\frac12}\\
&\leq& C \|\Delta {u}\|_{L^2}\|\p_1{\theta}\|_{L^2}^{\frac12}\|\p_1\nabla{\theta}\|_{L^2}^{\frac12}\|\p_1\Delta\widetilde{\theta}\|_{L^2}\\
&\leq& C\|\Delta u\|_{L^2}^2(\|\p_1\theta\|_{L^2}^2+\|\p_1\nabla\theta\|_{L^2}^2)+\delta'\|\p_1{\Delta\theta}\|_{L^2}^2.
\eeno
Similarly, $P_{14}$ shares the same bounded with $P_{12}$. For $P_{13}$, we can bound it by
\beno
P_{13}
&\leq& C \|\Delta \bar{\theta}\|_{L^2}\|\Delta\widetilde{u}_1\|_{L^2}^{\frac12}\|\p_1\Delta\widetilde{u}_1\|_{L^2}^{\frac12}\|\p_1\widetilde{\theta}\|_{L^2}^{\frac12}\|\p_2\p_1\widetilde{\theta}\|_{L^2}^{\frac12}\\
&\leq& C \|\Delta {\theta}\|_{L^2}\|\p_1{\theta}\|_{L^2}^{\frac12}\|\p_1\Delta\widetilde{u}\|_{L^2}\|\p_1\Delta\widetilde{\theta}\|_{L^2}^{\frac12}\\
&\leq& C\|\Delta \theta\|_{L^2}^4\|\p_1\theta\|_{L^2}^2+\delta'(\|\p_1{\Delta u}\|_{L^2}^2+\|\p_1{\Delta\theta}\|_{L^2}^2).
\eeno
Therefore, 
\beno
P_{1}
\leq C(\|\Delta u\|_{L^2}^2+\|\Delta \theta\|_{L^2}^4)\times\|\p_1\theta\|_{H^1}^2+\delta'(\|\p_1{\Delta u}\|_{L^2}^2+\|\p_1{\Delta\theta}\|_{L^2}^2).
\eeno
According to the relation $\Delta u_2=\p_1\om$, we can decompose $P_2$ as follows, 
\beno
P_2&=& - \int \p_1\om \,\p_2  {\theta} \,\Delta {\theta}\,dx\\
&=&- \int \p_1 \widetilde{\om}\, \p_2  \bar{\theta} \,\Delta \bar{\theta}\,dx- \int \p_1 \widetilde{\om} \,\p_2 \bar{\theta} \,\Delta \widetilde{\theta}\,dx\\
&\quad&- \int \p_1 \widetilde{\om}\, \p_2  \widetilde{\theta} \,\Delta \bar{\theta}\,dx- \int \p_1 \widetilde{\om} \,\p_2 \widetilde{\theta} \,\Delta \widetilde{\theta}\,dx\\
&:=& P_{21}+P_{22}+P_{23}+P_{24}.
\eeno
Clearly,  $P_{21}=0$.  Making use of Lemma \ref{goo1}, Lemma \ref{goo4} and Lemma \ref{goo2}, we  obtain
\beno
P_{22},P_{24}
&\leq& C \|\p_2 \bar{\theta}\|_{L^2}\|\p_1\widetilde{\om}\|_{L^2}^{\frac12}\|\p_2\p_1\widetilde{\om}\|_{L^2}^{\frac12}\|\Delta\widetilde{\theta}\|_{L^2}^{\frac12}\|\p_1\Delta\widetilde{\theta}\|_{L^2}^{\frac12}\\
&\leq& C \|\nabla\theta \|_{L^2}\|\p_1{\om}\|_{L^2}^{\frac12}\|\p_1\nabla\widetilde{\om}\|_{L^2}^{\frac12}\|\p_1\Delta\widetilde{\theta}\|_{L^2}\\
&\leq& C\|\nabla \theta\|_{L^2}^4\|\p_1\om\|_{L^2}^2+\delta'(\|\p_1{\nabla\om }\|_{L^2}^2+\|\p_1{\Delta\theta}\|_{L^2}^2)
\eeno
and
\beno
P_{23}
&\leq& C \|\Delta \bar{\theta}\|_{L^2}\|\p_1\widetilde{\om}\|_{L^2}^{\frac12}\|\p_2\p_1\widetilde{\om}\|_{L^2}^{\frac12}\|\p_2\widetilde{\theta}\|_{L^2}^{\frac12}\|\p_1\p_2\widetilde{\theta}\|_{L^2}^{\frac12}\\
&\leq& C \|\Delta\theta \|_{L^2}\|\p_1{\om}\|_{L^2}^{\frac12}\|\p_1\nabla\widetilde{\om}\|_{L^2}^{\frac12}\|\p_1\Delta\widetilde{\theta}\|_{L^2}\\
&\leq& C\|\Delta \theta\|_{L^2}^4\|\p_1\om\|_{L^2}^2+\delta'(\|\p_1{\nabla\om }\|_{L^2}^2+\|\p_1{\Delta\theta}\|_{L^2}^2).
\eeno
Thus,
\beno
P_{2}
\leq C(\|\nabla u\|_{L^2}^4+\|\Delta \theta\|_{L^2}^4)\times\|\p_1\om\|_{L^2}^2+\delta'(\|\p_1{\Delta u}\|_{L^2}^2+\|\p_1{\Delta\theta}\|_{L^2}^2).
\eeno
For $P_3$, we can bound it by
\beno
P_{3}&=&-2\int \p_1\widetilde{u}_1\,\p_1\p_1\widetilde{\theta}\,\Delta\theta\,dx\\
&\leq& C \|\Delta {\theta}\|_{L^2}\|\p_1\widetilde{u}_1\|_{L^2}^{\frac12}\|\p_2\p_1\widetilde{u}_1\|_{L^2}^{\frac12}\|\p_1\p_1\widetilde{\theta}\|_{L^2}^{\frac12}\|\p_1\p_1\p_1\widetilde{\theta}\|_{L^2}^{\frac12}\\
&\leq& C \|\Delta\theta \|_{L^2}\|\p_1{u}\|_{L^2}^{\frac12}\|\p_1\Delta\widetilde{u}\|_{L^2}^{\frac12}\|\p_1\Delta\widetilde{\theta}\|_{L^2}\\
&\leq& C\|\Delta \theta\|_{L^2}^4\|\p_1u\|_{L^2}^2+\delta'(\|\p_1{\Delta u }\|_{L^2}^2+\|\p_1{\Delta\theta}\|_{L^2}^2).
\eeno
$P_{4}$ can be bounded in the same way. Using Lemma \ref{goo1}, we can write $P_5$ as
\beno
P_5&=& - 2\int \p_2 \bar{u}_1 \,\p_1 \p_2 \widetilde{\theta}\, \Delta \bar{\theta}\,dx- 2\int \p_2 \bar{u}_1 \,\p_1\p_2  \widetilde{\theta} \,\Delta \widetilde{\theta}\,dx\\
&\quad&- 2\int \p_2 \widetilde{u}_1 \,\p_1\p_2  \widetilde{\theta} \,\Delta \bar{\theta}\,dx- 2\int \p_2 \widetilde{u}_1 \,\p_1\p_2  \widetilde{\theta} \,\Delta \widetilde{\theta}\,dx\\
&:=& P_{51}+P_{52}+P_{53}+P_{54}.
\eeno
It is easy to check that $P_{51}=0$. $P_{52}$ and $P_{54}$ can be bounded by
\beno
P_{52},P_{54}
\leq C\|\nabla u\|_{L^2}^4\|\p_1\nabla\theta\|_{L^2}^2+\delta'\|\p_1{\Delta\theta}\|_{L^2}^2,
\eeno
and $P_{53}$ have the bound 
\beno
P_{53}
\leq C\|\Delta \theta\|_{L^2}^4\|\p_1\nabla u\|_{L^2}^2+\delta'(\|\p_1{\Delta u}\|_{L^2}^2+\|\p_1{\Delta\theta}\|_{L^2}^2).
\eeno
Finally we estimate $P_6$, which can be written as 
\beno
P_6&=& 2\int \p_1u_1\,\p_2\p_2\theta\,\Delta\theta\,dx\\
&=&2\int \p_1 \widetilde{u}_1 \,\p_2 \p_2 \bar{\theta} \,\Delta \bar{\theta}\,dx+ 2\int \p_1 \widetilde{u}_1 \,\p_2\p_2  \bar{\theta} \,\Delta \widetilde{\theta}\,dx\\
&\quad&+ 2\int \p_1 \widetilde{u}_1 \,\p_2\p_2  \widetilde{\theta} \,\Delta \bar{\theta}\,dx+ 2\int \p_1 \widetilde{u}_1 \,\p_2\p_2  \widetilde{\theta} \,\Delta \widetilde{\theta}\,dx\\
&:=& P_{61}+P_{62}+P_{63}+P_{64}.
\eeno
As in the estimate of $P_1$, we have $P_{61}=0$ and 
\beno
P_{62},P_{63},P_{64}
\leq C\|\Delta \theta\|_{L^2}^4\|\p_1 u\|_{L^2}^2+\delta'(\|\p_1{\Delta u}\|_{L^2}^2+\|\p_1{\Delta\theta}\|_{L^2}^2).
\eeno
Inserting \eqref{nb-sec3} and the estimates for $P_1$ through $P_6$ in \eqref{h2-sec3}, and choosing $\delta'>0$  
sufficiently small, we obtain
\ben
&& \quad\|\nabla\om(t)\|_{L^2}^2+\|\Delta\theta(t)\|_{L^2}^2 +  \nu\int_0^t \|\p_1  \nabla\om(\tau)\|_{L^2}^2\,d\tau+ \kappa\int_0^t \|\p_1 \Delta \theta(\tau)\|_{L^2}^2\,d\tau \notag\\
&&\leq C\int_0^t\, (\|\p_1 u\|_{H^2}^2+\|\p_1 \theta\|_{H^2}^2)\times (\| u\|_{H^2}^2+\| u\|_{H^2}^4+\| \theta\|_{H^2}^2+\| \theta\|_{H^2}^4)\,d\tau.\label{h2'-sec3}
\een
Combining \eqref{h2'-sec3},  \eqref{l2-sec3} and \eqref{h1'-sec3} leads to the desired inequality in  \eqref{non}.
This completes the proof of Theorem \ref{main1}. 
\end{proof}

\vskip .3in 
\section{Proof of Theorem \ref{main2}}
\label{sec4}

This section proves Theorem \ref{main2}. We work with the equations of $(\widetilde u, \widetilde \theta)$ 
and make use of the properties of the orthogonal decomposition and various anisotropic inequalities. 

\vskip .1in 
\begin{proof}[Proof of Theorem \ref{main2}]. 
We first write the equation of $(\bar u, \bar \theta)$. {Making use of Lemma \ref{goo1}, we have $\p_1\bar u=0$ and 
\beno
\overline{u\cdot \nabla \bar{u}}=\overline{u_1\p_1  \bar{u}}+\overline{u_2\p_2  \bar{u}}={\bar{u}_2\p_2  \bar{u}}.
\eeno
Since $\nabla \cdot u=0$ in $\Omega$,  there exists a stream function $\psi$ such that 
$$
u=\nabla^{\perp}\psi: = (-\p_2 \psi, \p_1\psi).
$$
Then
\beno
\bar{u}_2=\overline{\p_1\psi}=0
\eeno
and 
\beq\label{w5}
\overline{u\cdot \nabla \bar{u}}=0.
\eeq
}  Taking the average of (\ref{B1}) and making use of (\ref{w5}) yield
\beq\label{bar-boussinesq}
\begin{cases}
\partial_{t} \bar{u}+\overline{u\cdot \nabla \widetilde{u}}+\left(\begin{array}{c}
	{0} \\
	{\partial_{2} \bar{p}}
	\end{array}\right)
	=\left(\begin{array}{c}
		{0} \\
		{\bar{\theta}}
	\end{array}\right), \\
\partial_t \bar{\theta}+\overline{u\cdot \nabla \widetilde{\theta}}=0.
\end{cases}
\eeq
Taking the difference of (\ref{B1}) and (\ref{bar-boussinesq}), we find 
\beq\label{tilde-boussinesq}
\begin{cases}
\p_{t} \widetilde{u}+\widetilde{u\cdot \nabla \widetilde{u}}+u_2\p_2\bar{u}-\nu\p_1^2\widetilde{u}+\nabla\widetilde{p}
		=\widetilde{\theta}e_2,\\
\p_{t} \widetilde{\theta}+\widetilde{u\cdot \nabla \widetilde{\theta}}+u_2\p_2\bar{\theta}-{ \kappa\p_1^2\widetilde{\theta}}+\widetilde{u}_2
		=0.
\end{cases}
\eeq
The $L^2$-estimate gives
\beno
&\quad&\frac12\frac{d}{dt}(\|\widetilde{u}(t)\|_{L^2}^2+\|\widetilde{\theta}(t)\|_{L^2}^2)+\nu\|\p_1\widetilde{u}\|_{L^2}^2+\kappa\|\p_1\widetilde{\theta}\|_{L^2}^2\\
&=&-\int \widetilde{u\cdot \nabla \widetilde{u}}\cdot\widetilde{u}\,dx-\int u_2\,\p_2\bar{u}\cdot\widetilde{u}\,dx-\int \widetilde{u\cdot \nabla \widetilde{\theta}}~\widetilde{\theta}\,dx-\int u_2\,\p_2\bar{\theta}~\widetilde{\theta}\,dx\\
&:=& A_1+A_2+A_3+A_4.
\eeno
For $A_1$, according to the divergence-free condition of $u$ and Lemma \ref{goo1}, we have 
\beno
A_1&=&-\int \widetilde{u\cdot \nabla \widetilde{u}}\cdot\widetilde{u}\,dx=-\int {u\cdot \nabla \widetilde{u}}\cdot\widetilde{u}\,dx+\int \overline{u\cdot \nabla \widetilde{u}}\cdot\widetilde{u}\,dx=0.
\eeno
Similarly, $A_3=0$. Then we estimate $A_2$. By Lemma \ref{goo1}, Lemma \ref{goo4} and Lemma \ref{goo2}, 
\beno
A_2&=&-\int \widetilde{u}_2\,\p_2\bar{u}\cdot\widetilde{u}\,dx\\
&\leq& C\|\p_2\bar{u}\|_{L^2}\|\widetilde{u}_2\|_{L^2}^{\frac12}\|\p_2\widetilde{u}_2\|_{L^2}^{\frac12}\|\widetilde{u}\|_{L^2}^{\frac12}\|\p_1\widetilde{u}\|_{L^2}^{\frac12}\\
&\leq& C\|\p_2\bar{u}\|_{L^2}\|\p_1\widetilde{u}_2\|_{L^2}^{\frac12}\|\p_1\widetilde{u}_1\|_{L^2}^{\frac12}\|\p_1\widetilde{u}\|_{L^2}^{\frac12}\|\p_1\widetilde{u}\|_{L^2}^{\frac12}\\
&\leq& C\|u\|_{H^1}\|\p_1\widetilde{u}\|_{L^2}^2.
\eeno
Similarly, 
\beno
A_4=-\int \widetilde{u}_2\,\p_2\bar{\theta}~\widetilde{\theta}\,dx\leq C\|\theta\|_{H^1}(\|\p_1\widetilde{u}\|_{L^2}^2+\|\p_1\widetilde{\theta}\|_{L^2}^2).
\eeno
Collecting the estimates for $A_1$ through $A_4$, we obtain 
\beno
\frac{d}{dt}(\|\widetilde{u}(t)\|_{L^2}^2+\|\widetilde{\theta}(t)\|_{L^2}^2)+(2 \nu-C\|(u,\theta)\|_{H^1})\|\p_1\widetilde{u}\|_{L^2}^2\\
+(2 \kappa-C\|(u,\theta)\|_{H^1})\|\widetilde{\theta}\|_{L^2}^2\leq 0.
\eeno
By Theorem \ref{main1},  if $\varepsilon>0$ is sufficiently small and
$\|u_0\|_{H_2}+\|\theta_0\|_{H_2}\leq\varepsilon$, then $\|(u(t),\theta(t))\|_{H^2}\leq C\varepsilon$ and
$$
2\nu - C\|(u(t),\theta(t))\|_{H^2}\geq \nu,\quad 2\kappa - C\|(u(t),\theta(t))\|_{H^2}\geq \kappa.
$$
Invoking the Poincar\'e type inequality in Lemma \ref{goo2} leads to the desired exponential decay for $\|(\widetilde{u},\widetilde{\theta})\|_{L^2}$, 
\ben\label{L2-decay}
\|\widetilde{u}(t)\|_{L^2}+\|\widetilde{\theta}(t)\|_{L^2}\leq  (\|{u}_0\|_{L^2}+\|{\theta}_0\|_{L^2})e^{-C_1t},
\een
where $C_1= C_1(\nu, \eta)>0$. We now turn to the exponential decay of $\|(\na \widetilde{u}(t), \na \widetilde{\theta}(t))\|_{L^2}$. Applying $\nabla$ to (\ref{tilde-boussinesq}) yields
\beq\label{d-tilde-boussinesq}
\begin{cases}
\p_{t} \nabla\widetilde{u}+\nabla(\widetilde{u\cdot \nabla \widetilde{u}})+\nabla(u_2\p_2\bar{u})-\nu\p_1^2\nabla\widetilde{u}+\nabla\nabla\widetilde{p}
=\nabla(\widetilde{\theta}e_2),\\
\p_{t} \nabla\widetilde{\theta}+\nabla(\widetilde{u\cdot \nabla \widetilde{\theta}})+\nabla(u_2\p_2\bar{\theta})-\kappa{\p_1^2\nabla\widetilde{\theta}}+\nabla\widetilde{u}_2
	=0.
\end{cases}
\eeq
Taking the $L^2$ inner product of system \eqref{d-tilde-boussinesq} with $(\nabla\widetilde{u},\,\nabla\widetilde{\theta})$, we have
\ben
&\quad&\frac12\frac{d}{dt}(\|\nabla\widetilde{u}(t)\|_{L^2}^2+\|\nabla\widetilde{\theta}(t)\|_{L^2}^2)+\nu\|\p_1\nabla\widetilde{u}\|_{L^2}^2+\kappa\|\p_1\nabla\widetilde{\theta}\|_{L^2}^2\notag\\
&=&-\int \nabla(\widetilde{u\cdot \nabla \widetilde{u}})\cdot\nabla\widetilde{u}\,dx-\int\nabla (u_2\p_2\bar{u})\cdot\nabla\widetilde{u}\,dx\notag\\
&\quad&-\int \nabla(\widetilde{u\cdot \nabla \widetilde{\theta}})\cdot\nabla\widetilde{\theta}\,dx-\int \nabla(u_2\p_2\bar{\theta})\cdot\nabla\widetilde{\theta}\,dx\notag\\
&:=& B_1+B_2+B_3+B_4.\label{H1-tilde-u}
\een
By Lemma \ref{goo1}, $B_1$ can be further written as  
\beno
B_1&=&-\int \nabla({u\cdot \nabla \widetilde{u}})\cdot\nabla\widetilde{u}\,dx +\int \nabla(\overline{u\cdot \nabla \widetilde{u}})\cdot\nabla\widetilde{u}\,dx\\
&=&-\int\p_1u_1\,\p_1\widetilde{u}\cdot\p_1\widetilde{u}\,dx+\int\p_1u_2\,\p_2\widetilde{u}\cdot\p_1\widetilde{u}\,dx\\
&\quad&-\int\p_2u_1\,\p_1\widetilde{u}\cdot\p_2\widetilde{u}\,dx+\int\p_2u_2\,\p_2\widetilde{u}\cdot\p_2\widetilde{u}\,dx\\
&:=& B_{11}+B_{12}+B_{13}+B_{14}.
\eeno
Using Lemma \ref{goo4} and Lemma \ref{goo2}, $B_{11}$ can be bounded by
\beno
B_{11}&\leq&C\|\p_1u_1\|_{L^2}\|\p_1\widetilde{u}\|_{L^2}^{\frac12}\|\p_2\p_1\widetilde{u}\|_{L^2}^{\frac12}\|\p_1\widetilde{u}\|_{L^2}^{\frac12}\|\p_1\p_1\widetilde{u}\|_{L^2}^{\frac12}\\
&\leq&C\|\p_1u_1\|_{L^2}\|\p_1\widetilde{u}\|_{L^2}^{\frac12}\|\p_2\p_1\widetilde{u}\|_{L^2}^{\frac12}\|\p_1\widetilde{u}\|_{L^2}^{\frac12}\|\p_1\p_1\widetilde{u}\|_{L^2}^{\frac12}\\
&\leq&C\|u\|_{H^1}\|\p_1\nabla\widetilde{u}\|_{L^2}^2.
\eeno
$B_{12}$ and $B_{13}$ can be bounded similarly and
\beno
B_{12},\,B_{13}\leq C\|u\|_{H^1}\|\p_1\nabla\widetilde{u}\|_{L^2}^2.
\eeno
For $B_{14}$, according to the divergence-free condition of $u$ and similar as $B_{11}$, we have 
\beno
B_{14}&=&{ -}\int\p_1u_1\,\p_2\widetilde{u}\cdot\p_2\widetilde{u}\,dx\,=\,\int\p_1\widetilde{u}_1\,\p_2\widetilde{u}\cdot\p_2\widetilde{u}\,dx\\
&\leq&C\|\p_2\widetilde{u}\|_{L^2}\|\p_1\widetilde{u}_1\|_{L^2}^{\frac12}\|\p_2\p_1\widetilde{u}_1\|_{L^2}^{\frac12}\|\p_2\widetilde{u}\|_{L^2}^{\frac12}\|\p_1\p_2\widetilde{u}\|_{L^2}^{\frac12}\\
&\leq&C\|u\|_{H^1}\|\p_1\nabla\widetilde{u}\|_{L^2}^2.
\eeno
Therefore,  $B_1$ is bounded by
\beno
|B_{1}|\leq C\|u\|_{H^1}\|\p_1\nabla\widetilde{u}\|_{L^2}^2.
\eeno
Similarly, we can bound $B_3$ by
\beno
|B_{3}|\leq C(\|u\|_{H^1}+\|\theta\|_{H^1})\times(\|\p_1\nabla\widetilde{u}\|_{L^2}^2+\|\p_1\nabla\widetilde{\theta}\|_{L^2}^2).
\eeno
To bound $B_2$, we first write it explicitly as 
\beno
B_2&=&-\int\nabla (u_2\p_2\bar{u})\cdot\nabla\widetilde{u}\,dx\\
&=&-\int\p_1 u_2\,\p_2\bar{u}\cdot\p_1\widetilde{u}\,dx-\int u_2\,\p_2\p_1\bar{u}\cdot\p_1\widetilde{u}\,dx\\
&\quad&-\int\p_2 u_2\,\p_2\bar{u}\cdot\p_2\widetilde{u}\,dx-\int u_2\,\p_2\p_2\bar{u}\cdot\p_2\widetilde{u}\,dx\\
&:=&B_{21}+B_{22}+B_{23}+B_{24}.
\eeno
By Lemma \ref{goo1}, Lemma \ref{goo4} and Lemma \ref{goo2}, 
\beno
B_{21}&=&-\int\p_1\widetilde{u}_2\,\p_2\bar{u}\cdot\p_1\widetilde{u}\,dx\\
&\leq&C\|\p_2\bar{u}\|_{L^2}\|\p_1\widetilde{u}_2\|_{L^2}^{\frac12}\|\p_2\p_1\widetilde{u}_2\|_{L^2}^{\frac12}\|\p_1\widetilde{u}\|_{L^2}^{\frac12}\|\p_1\p_1\widetilde{u}\|_{L^2}^{\frac12}\\
&\leq&C\|u\|_{H^1}\|\p_1\nabla\widetilde{u}\|_{L^2}^2.
\eeno
According to the definition of $\bar{u}$, 
\beno
B_{22}=-\int u_2\,\p_2\p_1\bar{u}\cdot\p_1\widetilde{u}\,dx=0.
\eeno
Similarly,  {$B_{23}$ and $B_{24}$} can be bounded by
\beno
B_{23}&=&\int\p_1\widetilde{u}_1\,\p_2\bar{u}\cdot\p_2\widetilde{u}\,dx\\
&\leq&C\|\p_2\bar{u}\|_{L^2}\|\p_1\widetilde{u}_1\|_{L^2}^{\frac12}\|\p_2\p_1\widetilde{u}_1\|_{L^2}^{\frac12}\|\p_2\widetilde{u}\|_{L^2}^{\frac12}\|\p_1\p_2\widetilde{u}\|_{L^2}^{\frac12}\\
&\leq&C\|u\|_{H^1}\|\p_1\nabla\widetilde{u}\|_{L^2}^2
\eeno
and 
\beno
B_{24}&=&-\int\widetilde{u}_2\,\p_2\p_2\bar{u}\cdot\p_2\widetilde{u}\,dx\\
&\leq&C\|\p_2\p_2\bar{u}\|_{L^2}\|\widetilde{u}_2\|_{L^2}^{\frac12}\|\p_2\widetilde{u}_2\|_{L^2}^{\frac12}\|\p_2\widetilde{u}\|_{L^2}^{\frac12}\|\p_1\p_2\widetilde{u}\|_{L^2}^{\frac12}\\
&\leq&C\|u\|_{H^2}\|\p_1\nabla\widetilde{u}\|_{L^2}^2.
\eeno
Thus we obtain the bound for $B_2$, 
\beno
|B_{2}|\leq C\|u\|_{H^2}\|\p_1\nabla\widetilde{u}\|_{L^2}^2.
\eeno
Similarly,  $B_4$ is bounded by
\beno
|B_{4}|\leq C\|\theta\|_{H^2}\times(\|\p_1\nabla\widetilde{u}\|_{L^2}^2+\|\p_1\nabla\widetilde{\theta}\|_{L^2}^2).
\eeno
Inserting the estimates for $B_1$ through $B_4$ in \eqref{H1-tilde-u},  we obtain
\beno
\frac{d}{dt}(\|\nabla\widetilde{u}(t)\|_{L^2}^2+\|\nabla\widetilde{\theta}(t)\|_{L^2}^2)+ (2\nu -C \|(u, \theta)\|_{H^2}) \|\p_1\nabla\widetilde{u}\|_{L^2}^2\\
+ (2\kappa -C \|(u, \theta)\|_{H^2}) \|\p_1\nabla\widetilde{\theta}\|_{L^2}^2\notag\leq 0.
\eeno
Choosing $\varepsilon$ sufficiently small and invoking the Poincar\'e type inequality in Lemma \ref{goo2}, we  obtain the exponential decay result for $\|(\nabla\widetilde{u},\nabla\widetilde{\theta})\|_{L^2}$,
\ben\label{H1-decay}
\|\nabla\widetilde{u}(t)\|_{L^2}+\|\nabla\widetilde{\theta}(t)\|_{L^2}\leq  (\|\nabla{u}_0\|_{L^2}+\|\nabla{\theta}_0\|_{L^2})e^{-C_1 t}.
\een
Combining the estimates \eqref{L2-decay} and \eqref{H1-decay}, we obtain the desired decay result. This completes
the proof of Theorem \ref{main2}. 
\end{proof}

\vskip .2in 
\noindent {\bf Acknowledgments}.
Dong is partially supported by the National Natural Science Foundation of China (No.11871346),
the Natural Science Foundation of Guangdong Province (No. 2018A030313024), the Natural Science Foundation of Shenzhen City (No. JCYJ 20180305125554234) and Research Fund of Shenzhen University (No. 2017056).
Wu was partially supported by NSF under grant DMS 1624146 and the AT\&T Foundation at Oklahoma State University. 
 X. Xu was partially supported by the National Natural Science Foundation of China (No. 11771045 and No.11871087).  N. Zhu was partially supported by the National Natural Science Foundation of China (No. 11771043 and No. 11771045).


\vskip .2in 

\end{document}